\newtheorem{theorem}{Theorem}[section]
\newtheorem{definition}[theorem]{Definition}
\newtheorem{lemma}[theorem]{Lemma}
\newtheorem{proposition}[theorem]{Proposition}
\begin{document}
\abovedisplayskip=6pt plus 1pt minus 1pt \belowdisplayskip=6pt
plus 1pt minus 1pt
\thispagestyle{empty} \vspace*{-1.0truecm} \noindent
\vskip 10mm

\begin{center}{\large Positive quandle homology and its applications in knot theory} \end{center}

\vskip 5mm
\begin{center}{Zhiyun Cheng \quad Hongzhu Gao\\
{\small School of Mathematical Sciences, Beijing Normal University
\\Laboratory of Mathematics and Complex Systems, Ministry of
Education, Beijing 100875, China
\\(email: czy@bnu.edu.cn \quad hzgao@bnu.edu.cn)}}\end{center}

\vskip 1mm

\noindent{\small {\small\bf Abstract} Algebraic homology and cohomology theories for quandles have been studied extensively in recent years. With a given quandle 2(3)-cocycle one can define a state-sum invariant for knotted curves(surfaces). In this paper we introduce another version of quandle (co)homology theory, say positive quandle (co)homology. Some properties of positive quandle (co)homology groups are given and some applications of positive quandle cohomology in knot theory are discussed.
\ \

\vspace{1mm}\baselineskip 12pt

\noindent{\small\bf Keywords} quandle homology; positive quandle homology; cocycle knot invariant\ \

\noindent{\small\bf MR(2010) Subject Classification} 57M25, 57M27, 57Q45\ \ {\rm }}

\vskip 1mm

\vspace{1mm}\baselineskip 12pt

\section{Introduction}
In knot theory, by considering representations from the knot group onto the dihedral group of order $2n$ one obtain a family of elementary knot invariants, known as Fox $n$-colorings \cite{Fox1961}. Quandle, a set with certain self-distributive operation satisfying axioms analogous to the Reidemeister moves, was first proposed by D. Joyce \cite{Joy1982} and S. V. Matveev \cite{Mat1984} independently. With a given quandle $X$ one can define the quandle coloring invariant by counting the quandle homomorphisms from the fundamental quandle of a knot to $X$. For the fundamental quandle and its presentations the reader is referred to \cite{Joy1982} and \cite{Fen1992}.

Equivalently speaking, one can label each arc of a knot diagram by an element of a fixed quandle, subject to certain constraints. The quandle coloring invariant can be computed by counting ways of these labellings. It is natural to consider how to improve this integral valued knot invariant. Since the quandle coloring invariant equals the number of different proper colorings, it is natural to associate a weight function to each colored knot diagram which does not depend on the choice of the knot diagram. In this way, instead of several colored knot diagrams one will obtain several weight functions and the number of these weight functions is exactly the quandle coloring invariant. In \cite{Car2003} J.S. Carter et al. associate a Boltzmann weight to each crossing and then consider the signed product of Boltzmann weights for all crossing points. In fact based on R. Fenn, C. Rourke and B. Sanderson's framework of rack and quandle homology \cite{Fen1995,Fen1996}, J.S. Carter et al. describe a homology theory for quandles such that each 2-cocycle and 3-cocycle can be used to define a family of invariants of knots and knotted surfaces respectively. Many applications of quandle cocycle invariants have been investigated in the past decade. For example, with a suitable choice of 3-cocycle from the dihedral quandle $R_3$, one can prove the chirality of trefoil \cite{Kam2002}. For knotted surface, by using cocycle invariants it was proved that the 2-twist spun trefoil is non-invertible and has triple point number 4 \cite{Car2003,Sat2004}.

In this paper we introduce another quandle homology and cohomology theory, say positive quandle homology and positive quandle cohomology. The definition of positive quandle (co)homology is similar to that of the original quandle (co)homology. It is not surprising that positive quandle homology shares many common properties with quandle homology, which will be discussed in Section 4. The most interesting part of this new quandle (co)homology theory is that it also can be used to define cocycle invariants for knots and knotted surfaces. Most properties of quandle homology and quandle cocycle invariants have their corresponding versions in positive quandle homology theory. This phenomenon suggests that quandle homology theory and positive quandle homology theory are parallel to each other, and in some special cases (Proposition 3.3) they coincides with each other.

The rest of this paper is arranged as follows: In Section 2, a brief review of quandle structure and quandle coloring invariant is given. Some applications of quandle coloring invariant in knot theory will also be discussed. In Section 3, we give the definition of positive quandle homology and cohomology. The relation between positive quandle (co)homology and quandle (co)homology will also be studied. Section 4 is devoted to the calculation of positive quandle homology and cohomology. We will calculate the positive quandle homology for some simple quandles. In Section 5, we show how to use positive quandle 2-cocycle and 3-cocycle to define invariants for knots and knotted surfaces respectively. We end this paper by two examples which study the trivially colored crossing points of a knot diagram, from where the motivation of this study arises.

\section{Quandle and quandle coloring invariants}
First we take a short review of the definition of quandle.
\begin{definition}
A quandle $(X, \ast)$, is a set $X$ with a binary operation $(a, b)\rightarrow a\ast b$ satisfying the following axioms:
\begin{enumerate}
  \item For any $a\in X$, $a\ast a=a$.
  \item For any $b, c\in X$, there exists a unique $a\in X$ such that $a\ast b=c$.
  \item For any $a, b, c\in X$, $(a\ast b)\ast c=(a\ast c)\ast(b\ast c)$.
\end{enumerate}
\end{definition}

Usually we simply denote a quandle $(X, \ast)$ by $X$. If a non-empty set $X$ with a binary operation $(a, b)\rightarrow a\ast b$ satisfies the second and the third axioms, then we name it a \emph{rack}. In particular if a quandle $X$ satisfies a modified version of the second axiom "for any $b, c\in X$, $(c\ast b)\ast b=c$", i.e. the unique element $a=c\ast b$, we call such quandle an \emph{involutory quandle} \cite{Joy1982}or \emph{kei} \cite{Tak1942}. The relation below follows directly from the definitions above:
\begin{center}
$\{$keis$\}\subset\{$quandles$\}\subset\{$racks$\}$.
\end{center}

In the second axiom we usually denote the element $a$ by $a=c\ast^{-1}b$. It is not difficult to observe that $(X, \ast^{-1})$ also defines a quandle structure, which is usually named as the \emph{dual quandle} of $(X, \ast)$. We denote the dual of $X$ by $X^*$. Note that a quandle is an involutory quandle if and only if $\ast=\ast^{-1}$.

Next we list some most common examples of quandle, see \cite{Fen1992,Ho2005,Joy1982,Ven2012} for more examples.
\begin{itemize}
  \item Trivial quandle of order $n$: $T_n=\{a_1, \cdots, a_n\}$ and $a_i\ast a_j=a_i$.
  \item Dihedral quandle of order $n$: $R_n=\{0, \cdots, n-1\}$ and $i\ast j=2j-i$ $($mod $n)$.
  \item Conjugation quandle: a conjugacy class $X$ of a group $G$ with $a\ast b=b^{-1}ab$.
  \item Alexander quandle: a $Z[t, t^{-1}]$-module $M$ with $a\ast b=ta+(1-t)b$.
\end{itemize}

From now on all the quandles mentioned throughout are assumed to be finite quandles. With a given finite quandle $X$, we can define an associated integer-valued knot invariant Col$_X(K)$, i.e. the quandle coloring invariant. Let $K$ be a knot diagram. We will often abuse our notation, letting $K$ refer both to a knot diagram and the knot itself. It is not difficult to determine the meaning that is intended from the context. A \emph{coloring} of $K$ by a given quandle $X$ is a map from the set of arcs of $K$ to the elements of $X$. We say a coloring is \emph{proper} if at each crossing the images of the map satisfies the relation given in Figure 1.
\begin{center}
\includegraphics{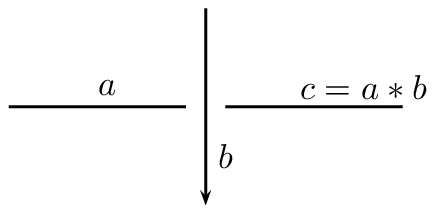}
\centerline{\small Figure 1: The proper coloring rule\quad}
\end{center}

Now we define the \emph{quandle coloring invariant} Col$_X(K)$ to be the number of proper colorings of $K$ by the quandle $X$. Since $X$ is finite, this definition makes sense. It is well-known that although the definition of Col$_X(K)$ depends on the choice of a knot diagram, however the integer Col$_X(K)$ is independent of the knot diagram. In fact the three axioms from the definition of quandle structure correspond to the three Reidemeister moves. In particular Col$_X(K)\geq n$ if $X$ contains $n$ elements, since there always exist $n$ trivial colorings. When $X=R_n$, we have Col$_{R_n}(K)=$Col$_n(K)$, the number of distinct Fox $n$-colorings of $K$ \cite{Fox1961}. It is well-known that Col$_n(K)$ equals the number of distinct representations from the knot group $\pi_1(R^3\backslash K)$ to the dihedral group of order $2n$. As a generalization of Fox $n$-coloring, Col$_X(K)$ is equivalent to the number of quandle homomorphisms from the fundamental quandle of $K$ to $X$. Here the fundamental quandle of $K$ is defined by assigning generators to arcs, and certain relations to crossings, which is quite similar to the presentation of the knot group. See \cite{Joy1982} and \cite{Mat1984} for more details.

Before ending this section we list some properties of the quandle coloring invariant.
\begin{itemize}
  \item Col$_X(K)$=Col$_X(\overline{K^*})$. Here $\overline{K^*}$ denotes the mirror image of $K$ with the reversed orientation. This follows from the fact that the fundamental quandles of $K$ and $\overline{K^*}$ are isomorphic \cite{Joy1982,Mat1984}.
  \item log$_{|X|}($Col$_X(K))\leq b(K)$ and log$_{|X|}($Col$_X(K))\leq u(K)+1$ \cite{Prz1998}. Here $|X|$ denotes the order of $X$, $b(K)$ and $u(K)$ denote the bridge number and unknotting number respectively. The readers are referred to \cite{Cla2013} for some recent progress on the applications of quandle coloring invariants.
  \item Col$_X(K)$ is not a Vassiliev invariant. This can be proved with the similar idea of \cite{Eis1999}, in which M. Eisermann proved that Col$_n(K)$ is not a Vassiliev invariant. Briefly speaking, in \cite{Eis1999} it was proved that if a Vassiliev invariant $F$ is bounded on any given vertical twist sequence, then $F$ is constant. On the other hand, for any fixed vertical twist sequence the braid index is bounded by some integer, say $b$. It is not difficult to show that the fundamental quandle of each knot of this vertical twist sequence can be generated by at most $b$ elements. Assume $X$ contains $n$ elements, then we deduce that Col$_X(K)\leq n^b$. Because Col$_X(K)$ is not constant (note that the choice of the quandle $X$ is arbitrary), therefore Col$_X(K)$ is not a Vassiliev invariant.
\end{itemize}

\section{Homology and cohomology theory for quandles}
Rack (co)homology theory was first defined in \cite{Fen1996}, which is similar to the group (co)homology theory. As a modification of the rack (co)homology, quandle (co)homology was proposed by J.S. Carter, D. Jelsovsky, S. Kamada, L. Langford and M. Saito in \cite{Car2003}. As an application, they defined state-sum invariants for knots and knotted surfaces by using quandle cocycles. Some calculations of quandle homology groups  and the associated state-sum invariants can be found in \cite{Car2001J,Car2001A,Moc2003,Nie2009}, or see \cite{Car2012} for a good survey. First we take a short review of the construction of the quandle (co)homology group, then we will give the definition of positive quandle (co)homology group.

Assume $X$ is a finite quandle. Let $C^R_n(X)$ denote the free abelian group generated by $n-$tuples $(a_1, \cdots, a_n)$, where $a_i\in X$. In order to make $C^R_n(X)$ into a chain complex, let us consider the following two homomorphisms from $C^R_n(X)$ to $C^R_{n-1}(X)$, here $\overline{a_i}$ denotes the omission of the element $a_i$.
\begin{flalign*}
& d_1(a_1, \cdots, a_n)=\sum\limits_{i=1}^n(-1)^i(a_1, \cdots, \overline{a_i}, \cdots, a_n) \quad (n\geq2)\\
& d_2(a_1, \cdots, a_n)=\sum\limits_{i=1}^n(-1)^i(a_1\ast a_i, \cdots, a_{i-1}\ast a_i, a_{i+1}, \cdots, a_n) \quad (n\geq2)\\
& d_i(a_1, \cdots, a_n)=0 \quad (n\leq1, i=1, 2)
\end{flalign*}

For the two homomorphisms $d_1$, $d_2$ defined above, we have the following lemma.
\begin{lemma}
$d_1^2=d_2^2=d_1d_2+d_2d_1=0$.
\end{lemma}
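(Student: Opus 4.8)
The plan is to verify directly that each of the three operators $d_1^2$, $d_2^2$, and $d_1d_2+d_2d_1$ annihilates a generator $(a_1,\dots,a_n)$, by expanding each composite as a double sum and pairing up terms that cancel. For $d_1^2$ this is the classical simplicial-identity computation: writing $d_1^2(a_1,\dots,a_n)=\sum_{i}\sum_{j}(-1)^i(-1)^{j'}(\dots)$ where $j'$ is the adjusted index after the first omission, one splits the double sum according to whether $j<i$ or $j\geq i$ (after reindexing), and observes that the term deleting $a_i$ then $a_j$ cancels the term deleting $a_j$ then $a_i$, because of the sign shift incurred by removing an earlier entry. This is exactly the standard argument showing the simplicial boundary squares to zero, so I would present it briefly and then emphasize that the other two identities follow the same bookkeeping pattern.

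For $d_2^2$, I would expand $d_2(d_2(a_1,\dots,a_n))$ as a double sum over the pair of indices $(i,j)$ at which we apply the inner and outer $d_2$. The key point is that, after the inner $d_2$ at index $i$, every surviving entry to the left of position $i$ has been $\ast a_i$-ed, so when the outer $d_2$ acts at some index $j$ one must track how $a_i$ itself transforms. Here the third quandle axiom $(x\ast y)\ast z=(x\ast z)\ast(y\ast z)$ is exactly what is needed: it guarantees that $(a_k\ast a_i)\ast a_j$ and $(a_k\ast a_j)\ast(a_i\ast a_j)$ agree, so that deleting index $i$ then index $j$ produces the same tuple (up to sign) as deleting index $j$ then index $i$, and the two cancel. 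As with $d_1^2$ the cancellation is governed by the $j<i$ versus $j>i$ case split and the sign shift $(-1)^{i}(-1)^{j}$ vs.\ $(-1)^{j}(-1)^{i-1}$.

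For the cross term $d_1d_2+d_2d_1$, I would again expand both composites into double sums indexed by $(i,j)$. A term of $d_1d_2$ deletes entry $j$ (via $d_2$, with the $\ast a_j$ twist on the left entries) and then deletes a further entry $i$ plainly; a term of $d_2d_1$ deletes entry $i$ plainly and then deletes entry $j$ with the twist. Matching the two families of terms, the twisted-then-plain deletion at $(i,j)$ coincides with the plain-then-twisted deletion at the corresponding shifted indices, and the sign conventions are arranged precisely so these come with opposite signs; the diagonal contributions where the two composites would act at the same index vanish because deleting an entry and then attempting to act at the now-removed slot is excluded by the range of summation. Summing, $d_1d_2+d_2d_1=0$ on generators, hence on all of $C^R_n(X)$ by linearity.

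The main obstacle is purely organizational rather than conceptual: one must be scrupulous about the reindexing of the summation variable after the first deletion (the "$a_i$ vs.\ $a_{i-1}$" shift) and about which left-hand entries have already been acted on by $\ast a_i$ or $\ast a_j$, since a misaligned index produces phantom non-cancelling terms. I would therefore fix clear notation for the partially-collapsed tuples at the outset and carry the case analysis $j<i$ / $j=i$ / $j>i$ uniformly through all three identities, invoking quandle axiom (3) only at the one place in the $d_2^2$ and $d_1d_2+d_2d_1$ computations where a left entry gets hit by two successive $\ast$ operations. Since these verifications are entirely routine and identical in spirit to the rack homology computation of Fenn--Rourke--Sanderson, I would keep the write-up terse.
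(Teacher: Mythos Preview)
Your proposal is correct and follows essentially the same route as the paper: a direct expansion of each composite on a generator into a double sum, a case split on the relative positions of the two indices, and pairwise cancellation governed by the sign shift from deleting an earlier entry. One small inaccuracy: the self-distributivity axiom (3) is needed only in the $d_2^2$ computation (where terms of the form $(a_k\ast a_i)\ast(a_j\ast a_i)$ must be matched with $(a_k\ast a_j)\ast a_i$), not in the cross term $d_1d_2+d_2d_1$, since in the latter no entry is ever acted on by two successive $\ast$ operations---the cancellation there is purely combinatorial, exactly as for $d_1^2$.
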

\begin{proof} One computes
\begin{flalign*}
&d_1^2(a_1, \cdots, a_n)&\\
=&d_1(\sum\limits_{i=1}^n(-1)^i(a_1, \cdots, \overline{a_i}, \cdots, a_n))\\
=&\sum\limits_{i=1}^n(-1)^i(\sum\limits_{j<i}(-1)^j(a_1, \cdots, \overline{a_j}, \cdots, \overline{a_i}, \cdots, a_n)+\sum\limits_{j>i}(-1)^{j-1}(a_1, \cdots, \overline{a_i}, \cdots, \overline{a_j}, \cdots, a_n))\\
=&0
\end{flalign*}

\begin{flalign*}
&d_2^2(a_1, \cdots, a_n)&\\
=&d_2(\sum\limits_{i=1}^n(-1)^i(a_1\ast a_i, \cdots, a_{i-1}\ast a_i, a_{i+1}, \cdots, a_n))\\
=&\sum\limits_{i=1}^n(-1)^i(\sum\limits_{j<i}(-1)^j((a_1\ast a_i)\ast (a_j\ast a_i), \cdots, (a_{j-1}\ast a_i)\ast (a_j\ast a_i), {a_{j+1}\ast a_i}, \cdots, a_{i-1}\ast a_i, a_{i+1}, \cdots, a_n)\\
+&\sum\limits_{j>i}(-1)^{j-1}((a_1\ast a_i)\ast a_j, \cdots, (a_{i-1}\ast a_i)\ast a_j, a_{i+1}\ast a_j, \cdots, a_{j-1}\ast a_j, a_{j+1}, \cdots, a_n))\\
=&0
\end{flalign*}

\begin{flalign*}
&d_1d_2(a_1, \cdots, a_n)+d_2d_1(a_1, \cdots, a_n)&\\
=&d_1(\sum\limits_{i=1}^n(-1)^i(a_1\ast a_i, \cdots, a_{i-1}\ast a_i, a_{i+1}, \cdots, a_n))+d_2(\sum\limits_{i=1}^n(-1)^i(a_1, \cdots, \overline{a_i}, \cdots, a_n))\\
=&\sum\limits_{i=1}^n\sum\limits_{j<i}(-1)^{i+j}(a_1\ast a_i, \cdots, \overline{a_j\ast a_i}, \cdots, a_{i-1}\ast a_i, a_{i+1}, \cdots, a_n)\\
+&\sum\limits_{i=1}^n\sum\limits_{j>i}(-1)^{i+j-1}(a_1\ast a_i, \cdots, a_{i-1}\ast a_i, a_{i+1}, \cdots, \overline{a_j}, \cdots, a_n)\\
+&\sum\limits_{i=1}^n\sum\limits_{j<i}(-1)^{i+j}(a_1\ast a_j, \cdots, a_{j-1}\ast a_j, a_{j+1}, \cdots, \overline{a_i}, \cdots, a_n)\\
+&\sum\limits_{i=1}^n\sum\limits_{j>i}(-1)^{i+j-1}(a_1\ast a_j, \cdots, \overline{a_i\ast a_j}, \cdots, a_{j-1}\ast a_j, a_{j+1}, \cdots, a_n)\\
=&0
\end{flalign*}
\end{proof}

Lemma 3.1 suggests us to investigate the following four chain complexes: $\{C^R_n(X), d_1\}$, $\{C^R_n(X), d_2\}$, $\{C^R_n(X), d_1+d_2\}$ and $\{C^R_n(X), d_1-d_2\}$. We remark that $\{C^R_n(X), d_1\}$ is acyclic. In a recent work of A. Inoue and Y. Kabaya \cite{Ino2013}, $\{C^R_n(X), d_1\}$ was regarded as a right $\mathbf{Z}[G_X]-$module, here $G_X$ denotes the associated group of $X$, i.e. $G_X$ is generated by the elements of $X$ and satisfies the relation $a\ast b=b^{-1}ab$. With this viewpoint they defined the simplicial quandle homology to be the homology group of the chain complex $\{C^R_n(X)\bigotimes_{\mathbf{Z}[G_X]}\mathbf{Z}, d_1\}$. The readers are referred to \cite{Ino2013} for more details.

Assume $X$ is a fixed finite quandle. Let $C^D_n(X)$ $(n\geq2)$ denote the free abelian group generated by $n-$tuples $(a_1, \cdots, a_n)$ with $a_i=a_{i+1}$ for some $1\leq i\leq n-1$, and $C^D_n(X)=0$ if $n\leq1$. The following lemma tells us $\{C^D_n(X), d_1\pm d_2\}$ is a sub-complex of $\{C^R_n(X), d_1\pm d_2\}$.
\begin{lemma}
$\{C^D_n(X), d_i\}$ is a sub-complex of $\{C^R_n(X), d_i\}$ $(i=1, 2)$.
\end{lemma}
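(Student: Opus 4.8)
The plan is to show that for $i = 1, 2$ the differential $d_i$ maps $C^D_n(X)$ into $C^D_{n-1}(X)$; since we already know from Lemma 3.1 that $d_i^2 = 0$ on all of $C^R_n(X)$, this immediately makes $\{C^D_n(X), d_i\}$ a subcomplex. So the entire content is the closure property $d_i(C^D_n(X)) \subseteq C^D_{n-1}(X)$.

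First I would fix a generator $(a_1, \dots, a_n)$ of $C^D_n(X)$, so there is an index $k$ with $1 \le k \le n-1$ and $a_k = a_{k+1}$. The strategy is to examine each term of $d_i(a_1,\dots,a_n)$ and show that after deleting one coordinate (and, in the $d_2$ case, acting by $\ast a_j$ on the early coordinates) there is still an adjacent repeated pair — \emph{except} for the two terms corresponding to deleting the $k$-th or the $(k+1)$-th coordinate, which must be shown to cancel each other. For $d_1$: if the deleted index $j$ satisfies $j \notin \{k, k+1\}$, then the pair $(a_k, a_{k+1})$ survives intact (if $j < k$ it is shifted but stays adjacent; if $j > k+1$ it is untouched), so that term lies in $C^D_{n-1}(X)$. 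The two remaining terms, $j = k$ and $j = k+1$, are $(-1)^k(a_1,\dots,\widehat{a_k},\dots,a_n)$ and $(-1)^{k+1}(a_1,\dots,\widehat{a_{k+1}},\dots,a_n)$; since $a_k = a_{k+1}$, the two resulting tuples are literally equal, and the signs are opposite, so they cancel. Hence $d_1(a_1,\dots,a_n) \in C^D_{n-1}(X)$.

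For $d_2$ the bookkeeping is slightly more delicate because deleting the $j$-th coordinate also replaces $a_1,\dots,a_{j-1}$ by $a_1\ast a_j,\dots,a_{j-1}\ast a_j$. I would again split on the position of $j$ relative to $k$. If $j > k+1$, the pair $(a_k, a_{k+1})$ is either untouched or both entries get hit by the same $\ast a_j$, giving $(a_k \ast a_j, a_{k+1}\ast a_j)$ which is still a repeated adjacent pair since $a_k = a_{k+1}$. If $j < k$, the pair sits to the right of the deletion site and is unaffected, so it survives. The two exceptional terms are $j = k$ and $j = k+1$. For $j = k$ the term is $(-1)^k(a_1\ast a_k,\dots,a_{k-1}\ast a_k, a_{k+1}, \dots, a_n)$; for $j = k+1$ it is $(-1)^{k+1}(a_1\ast a_{k+1},\dots,a_{k-1}\ast a_{k+1}, a_k\ast a_{k+1}, a_{k+2},\dots,a_n)$. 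Using $a_k = a_{k+1}$, the substitutions $\ast a_k$ and $\ast a_{k+1}$ agree on the first $k-1$ coordinates, and the extra entry $a_k \ast a_{k+1} = a_k \ast a_k = a_k = a_{k+1}$ by the idempotency axiom (axiom 1), so the two tuples coincide; the opposite signs cancel them. Therefore $d_2(a_1,\dots,a_n) \in C^D_{n-1}(X)$, and consequently so is $(d_1 \pm d_2)$ applied to any chain in $C^D_n(X)$.

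The main obstacle is purely notational: keeping the index shifts straight when $j < k$ versus $j > k+1$, and making sure the "extra" coordinate produced in the $j = k+1$ term of $d_2$ is correctly identified as $a_k \ast a_{k+1}$ and then collapsed to $a_{k+1}$ via the quandle idempotency axiom. Once one is careful that the idempotency axiom $a \ast a = a$ is exactly what is needed to make the $j=k$ and $j=k+1$ terms of $d_2$ cancel — this is the one place the quandle (as opposed to rack) structure is used — the rest is routine term-matching, and the statement for $d_1 \pm d_2$ follows by linearity.
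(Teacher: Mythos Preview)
Your proposal is correct and follows essentially the same approach as the paper: both proofs fix a generator with $a_k = a_{k+1}$, split the sum defining $d_i$ according to whether the deleted index $j$ lies to the left of $k$, to the right of $k+1$, or in $\{k,k+1\}$, observe that the outer terms retain an adjacent repeated pair, and cancel the two inner terms using $a_k = a_{k+1}$ (and, for $d_2$, the idempotency axiom $a_k \ast a_k = a_k$). One tiny slip in your exposition: in the $d_2$ case with $j > k+1$, the pair is always hit by $\ast a_j$ (not ``either untouched or''), but this does not affect the argument.
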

\begin{proof}
Choose an n-tuple $(a_1, \cdots, a_i, a_{i+1}, \cdots, a_n)\in C^D_n(X)$, where $a_i=a_{i+1}$. One computers
\begin{flalign*}
&d_1(a_1, \cdots, a_i, a_{i+1}, \cdots, a_n)&\\
=&\sum\limits_{j<i}(-1)^j(a_1, \cdots, \overline{a_j}, \cdots, a_i, a_{i+1}, \cdots, a_n)+\sum\limits_{j>i+1}(-1)^j(a_1, \cdots, a_i, a_{i+1}, \cdots, \overline{a_j}, \cdots, a_n)\\
+&(-1)^i(a_1, \cdots, \overline{a_i}, a_{i+1}, \cdots, a_n)+(-1)^{i+1}(a_1, \cdots, a_i, \overline{a_{i+1}}, \cdots, a_n)\\
=&\sum\limits_{j<i}(-1)^j(a_1, \cdots, \overline{a_j}, \cdots, a_i, a_{i+1}, \cdots, a_n)+\sum\limits_{j>i+1}(-1)^j(a_1, \cdots, a_i, a_{i+1}, \cdots, \overline{a_j}, \cdots, a_n)\\
\in &C^D_{n-1}(X)
\end{flalign*}

\begin{flalign*}
&d_2(a_1, \cdots, a_i, a_{i+1}, \cdots, a_n)&\\
=&\sum\limits_{j<i}(-1)^j(a_1\ast a_j, \cdots, a_{j-1}\ast a_j, a_{j+1}, \cdots, a_i, a_{i+1}, \cdots, a_n)\\
+&\sum\limits_{j>i+1}(-1)^j(a_1\ast a_j, \cdots, a_i\ast a_j, a_{i+1}\ast a_j, \cdots, a_{j-1}\ast a_j, a_{j+1}, \cdots, a_n)\\
+&(-1)^i(a_1\ast a_i, \cdots, a_{i-1}\ast a_i, a_{i+1}, \cdots, a_n)+(-1)^{i+1}(a_1\ast a_{i+1}, \cdots, a_i\ast a_{i+1}, a_{i+2}, \cdots, a_n)\\
=&\sum\limits_{j<i}(-1)^j(a_1\ast a_j, \cdots, a_{j-1}\ast a_j, a_{j+1}, \cdots, a_i, a_{i+1}, \cdots, a_n)\\
+&\sum\limits_{j>i+1}(-1)^j(a_1\ast a_j, \cdots, a_i\ast a_j, a_{i+1}\ast a_j, \cdots, a_{j-1}\ast a_j, a_{j+1}, \cdots, a_n)\\
\in &C^D_{n-1}(X)
\end{flalign*}
\end{proof}

Define $C_n^Q(X)=C_n^R(X)/C_n^D(X)$, then we have two chain complexes $\{C_n^Q(X), d_1\pm d_2\}$, here $d_1\pm d_2$ denote the induced homomorphisms. For simplicity, we use $\partial^+$ and $\partial^-$ to denote $d_1+d_2$ and $d_1-d_2$ respectively, and use $C_\ast^{W\pm}(X)$ to denote $\{C_\ast^{W}(X), \partial_\ast^\pm\}$ $(W\in\{R, D, Q\})$. For an abelian group $G$, define the the chain complex $C_\ast^{W\pm}(X; G)$ and cochain complex $C_{W\pm}^{\ast}(X; G)$ as below $(W\in\{R, D, Q\})$
\begin{itemize}
  \item $C_\ast^{W\pm}(X; G)=C_\ast^{W\pm}(X)\bigotimes G$, \quad $\partial_\ast^\pm=\partial_\ast^\pm\bigotimes$ id;
  \item $C_{W\pm}^{\ast}(X; G)=$Hom$(C_\ast^{W\pm}(X), G)$, \quad $\delta_\pm^\ast=$Hom$(\partial_\ast^\pm$, id).
\end{itemize}

The \emph{positive quandle (co)homology groups} of $X$ with coefficient $G$ is defined to be the (co)homology groups of the (co)chain complex $C_\ast^{Q+}(X; G)$ $(C_{Q+}^{\ast}(X; G))$, and the \emph{negative quandle (co)homology groups} of a quandle $X$ with coefficient $G$ is defined to be the (co)homology groups of the (co)chain complex $C_\ast^{Q-}(X; G)$ $(C_{Q-}^{\ast}(X; G))$. In other words,
\begin{center}
$H_n^{Q\pm}(X; G)=H_n(C_\ast^{Q\pm}(X; G))$ and $H^n_{Q\pm}(X; G)=H^n(C_{Q\pm}^{\ast}(X; G))$.
\end{center}
Similarly we can define the \emph{$\pm$ rack (co)homology groups} and \emph{$\pm$ degeneration (co)homology groups} as below,
\begin{center}
$H_n^{R\pm}(X; G)=H_n(C_\ast^{R\pm}(X; G))$ and $H^n_{R\pm}(X; G)=H^n(C_{R\pm}^{\ast}(X; G))$,\\
$H_n^{D\pm}(X; G)=H_n(C_\ast^{D\pm}(X; G))$ and $H^n_{D\pm}(X; G)=H^n(C_{D\pm}^{\ast}(X; G))$.
\end{center}

The reader has recognized that the negative quandle (co)homology groups are nothing but the quandle (co)homology groups introduced by J.S. Carter et al in \cite{Car2003}. Therefore we will still use the name quandle (co)homology instead of nagative quandle (co)homology, and write $H_n^Q(X; G)$ $(H^n_Q(X; G))$instead of $H_n^{Q-}(X; G)$ $(H^n_{Q-}(X; G))$. In the rest of this paper we will focus on the positive quandle homology groups $H_\ast^{Q+}(X; G)$ and cohomology groups $H^\ast_{Q+}(X; G)$. In particular, when $G=\mathbf{Z}_2$, the following result is obvious.
\begin{proposition}
$H_n^{Q+}(X; \mathbf{Z}_2)\cong H_n^Q(X; \mathbf{Z}_2)$ and $H^n_{Q+}(X; \mathbf{Z}_2)\cong H^n_Q(X; \mathbf{Z}_2)$.
\end{proposition}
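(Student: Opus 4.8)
The plan is to observe that over $\mathbf{Z}_2$ the two induced boundary operators $\partial^+=d_1+d_2$ and $\partial^-=d_1-d_2$ literally coincide, so that the positive and the negative quandle (co)chain complexes with $\mathbf{Z}_2$-coefficients are one and the same complex, whence their (co)homology groups are equal.

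First I would recall that, by construction, for any abelian group $G$ and any $W\in\{R,D,Q\}$ the chain groups $C_n^{W+}(X;G)$ and $C_n^{W-}(X;G)$ are both equal to $C_n^{W}(X)\otimes G$; only the differentials $\partial^\pm\otimes\mathrm{id}$ a priori differ. Now specialise to $G=\mathbf{Z}_2$. Since $-1=1$ in $\mathbf{Z}_2$, the map $d_2\otimes\mathrm{id}$ equals $-d_2\otimes\mathrm{id}$ on $C_n^{W}(X;\mathbf{Z}_2)$, and therefore $\partial^+\otimes\mathrm{id}=(d_1+d_2)\otimes\mathrm{id}=(d_1-d_2)\otimes\mathrm{id}=\partial^-\otimes\mathrm{id}$ as homomorphisms $C_n^{W}(X;\mathbf{Z}_2)\to C_{n-1}^{W}(X;\mathbf{Z}_2)$. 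Here one should note that Lemma 3.2 is stated for $d_1$ and $d_2$ separately, so the degeneration part $C_\ast^{D}(X)$ is a subcomplex in both the $\partial^+$ and the $\partial^-$ settings, and the quotient $C_\ast^{Q}(X)=C_\ast^{R}(X)/C_\ast^{D}(X)$ is the same in either case; so the identity $\partial^+\otimes\mathrm{id}=\partial^-\otimes\mathrm{id}$ passes to the quotient complex $C_\ast^{Q}(X;\mathbf{Z}_2)$.

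Taking $W=Q$, this shows that $C_\ast^{Q+}(X;\mathbf{Z}_2)$ and $C_\ast^{Q-}(X;\mathbf{Z}_2)=C_\ast^{Q}(X;\mathbf{Z}_2)$ are the identical chain complex, hence $H_n^{Q+}(X;\mathbf{Z}_2)=H_n^{Q}(X;\mathbf{Z}_2)$. For the cohomology statement I would apply $\mathrm{Hom}(-,\mathbf{Z}_2)$: since the two chain complexes agree, so do their duals, and the coboundary maps $\delta_+^\ast=\mathrm{Hom}(\partial^+,\mathrm{id})$ and $\delta_-^\ast=\mathrm{Hom}(\partial^-,\mathrm{id})$ coincide, giving $H^n_{Q+}(X;\mathbf{Z}_2)=H^n_{Q}(X;\mathbf{Z}_2)$.

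There is essentially no obstacle here; the proposition is an immediate consequence of $1=-1$ in $\mathbf{Z}_2$, and the only point requiring any care is the bookkeeping that the degeneration subcomplex used to form $C_\ast^{Q}(X)$ is unchanged when one passes from $\partial^-$ to $\partial^+$, which was already arranged by Lemma 3.2.
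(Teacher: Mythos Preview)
Your argument is correct and is exactly the obvious reasoning the paper has in mind: it states the proposition with the remark that ``the following result is obvious'' and gives no further proof. The only minor wording issue is in the cohomology step, where the cochain complex $C_{Q\pm}^\ast(X;\mathbf{Z}_2)=\mathrm{Hom}(C_\ast^{Q}(X),\mathbf{Z}_2)$ is the $\mathbf{Z}_2$-dual of the \emph{integral} complex rather than of the $\mathbf{Z}_2$-chain complex; but your conclusion that $\delta_+=\mathrm{Hom}(\partial^+,\mathrm{id})$ and $\delta_-=\mathrm{Hom}(\partial^-,\mathrm{id})$ agree still holds, since any $f\colon C_n^{Q}(X)\to\mathbf{Z}_2$ satisfies $f\circ(2d_2)=0$.
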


In the end of this section we list the positive quandle 2-cocycle condition and positive quandle 3-cocycle condition below. Later it will be shown that they are related to the third Reidemeister move of knots and the tetrahedral move of knotted surfaces. The readers are suggested to compare these with the quandle 2-cocycle condition and quandle 3-cocycle condition given in \cite{Car2003}.
\begin{itemize}
  \item A positive quandle 2-cocycle $\phi$ satisfies the condition
  \begin{center}
  $-\phi(b, c)-\phi(b, c)+\phi(a, c)+\phi(a\ast b, c)-\phi(a, b)-\phi(a\ast c, b\ast c)=0$.
  \end{center}
  \item A positive quandle 3-cocycle $\theta$ satisfies the condition
  \begin{center}
  $-\theta(b, c, d)-\theta(b, c, d)+\theta(a, c, d)+\theta(a\ast b, c, d)$\\
  $-\theta(a, b, d)-\theta(a\ast c, b\ast c, d)+\theta(a, b, c)+\theta(a\ast d, b\ast d, c\ast d)=0$.
  \end{center}
\end{itemize}

\section{Computing positive quandle homology and cohomology}
This section is devoted to the calculation of positive quandle homology and cohomology for some simple examples. Before this, we need to discuss some basic properties of the positive quandle homology and cohomology. Most of these results have their corresponding versions in quandle homology and quandle cohomology.

First it was pointed out that since $\{C_n^{Q}(X)\}$ is a chain complex of free abelian groups, there is a universal coefficient theorem for quandle homology and quandle cohomology \cite{Car2001J}. Due to the same reason, there also exists a universal coefficient theorem for positive quandle homology and cohomology.
\begin{theorem}[Universal Coefficient Theorem]
For a given quandle $X$, there are a pair of split exact sequences
\begin{center}
$0\rightarrow H^{Q+}_n(X; \mathbf{Z})\bigotimes G\rightarrow H_n^{Q+}(X; G)\rightarrow \emph{Tor}(H_{n-1}^{Q+}(X; \mathbf{Z}), G)\rightarrow0$,
\end{center}
\begin{center}
$0\rightarrow \emph{Ext}(H_{n-1}^{Q+}(X; \mathbf{Z}), G)\rightarrow H_{Q+}^n(X; G)\rightarrow \emph{Hom}(H_n^{Q+}(X; \mathbf{Z}), G)\rightarrow0$.
\end{center}
\end{theorem}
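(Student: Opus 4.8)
The plan is to observe that this statement is essentially formal: the only input it needs from the positive theory is that $\{C_n^{Q+}(X),\partial^+\}$ is a chain complex of \emph{free} abelian groups, and once that is in hand the two split exact sequences are instances of the standard algebraic universal coefficient theorem for chain complexes of free abelian groups, exactly as in the treatment of ordinary quandle homology in \cite{Car2001J}.

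So first I would check that $\{C_n^{Q+}(X),\partial^+\}$ really is such a complex. By Lemma 3.1, $(\partial^+)^2=(d_1+d_2)^2=d_1^2+(d_1d_2+d_2d_1)+d_2^2=0$, and by Lemma 3.2 each of $d_1,d_2$ carries $C_n^D(X)$ into $C_{n-1}^D(X)$, so $\partial^+=d_1+d_2$ descends to a well-defined differential on the quotient $C_n^Q(X)=C_n^R(X)/C_n^D(X)$, still with square zero. Moreover $C_n^R(X)$ is free abelian on the set of $n$-tuples $(a_1,\dots,a_n)$, and $C_n^D(X)$ is precisely the subgroup spanned by those basis tuples for which $a_i=a_{i+1}$ for some $i$; hence the quotient $C_n^Q(X)$ is again free abelian, with basis the images of the tuples satisfying $a_i\neq a_{i+1}$ for all $i$. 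Since $X$ is finite, these groups are finitely generated and free.

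With freeness established, I would run the usual homological-algebra argument. Let $Z_n$ and $B_n$ denote the cycles and boundaries of $C_\ast^{Q+}(X)$; since $B_{n-1}$ is a subgroup of the free abelian group $C_{n-1}^{Q+}(X)$ it is free, so the short exact sequence $0\to Z_n\to C_n^{Q+}(X)\to B_{n-1}\to 0$ splits. Tensoring this family of complexes with $G$ (respectively applying $\mathrm{Hom}(-,G)$) and using that the $Z_\ast$ and $B_\ast$ are free, the associated long exact sequences collapse to the two four-term sequences in the statement, and the splitting of $0\to Z_n\to C_n^{Q+}(X)\to B_{n-1}\to 0$ induces the (non-canonical) splittings asserted. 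Nothing in this step uses any property of the quandle $X$ beyond what has already been noted, and $G$ is an arbitrary abelian group throughout.

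The argument has essentially no obstacle: the only points needing attention are that $\partial^+$ genuinely descends to the quotient complex and squares to zero there, and these are exactly Lemma 3.2 and Lemma 3.1, which are already available. I would also remark in passing that the identical reasoning yields universal coefficient theorems for $H_\ast^{R\pm}$, $H^\ast_{R\pm}$ and for $H_\ast^{D\pm}$, $H^\ast_{D\pm}$, since $C_n^R(X)$ and $C_n^D(X)$ are themselves free abelian.
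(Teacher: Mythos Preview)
Your proposal is correct and matches the paper's approach exactly: the paper does not give a proof at all, simply remarking that ``since $\{C_n^{Q}(X)\}$ is a chain complex of free abelian groups, there is a universal coefficient theorem for quandle homology and quandle cohomology \cite{Car2001J}. Due to the same reason, there also exists a universal coefficient theorem for positive quandle homology and cohomology.'' Your write-up supplies the details behind that remark (freeness of the quotient $C_n^Q(X)$, well-definedness of $\partial^+$ via Lemmas 3.1 and 3.2, and the standard homological-algebra argument), and is if anything more thorough than what the paper provides.
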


The universal coefficient theorem tells us that it suffices to study the positive quandle homology and cohomology groups with integer coefficients. As usual we will omit the coefficient group $G$ if $G=\mathbf{Z}$. The following lemma gives an example of the computation of the simplest nontrivial quandle $R_3$ in detail.
\begin{lemma}
$H_{Q+}^2(R_3)\cong \mathbf{Z}_3.$
\end{lemma}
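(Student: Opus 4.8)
The plan is to compute $H_{Q+}^2(R_3)$ directly from the definitions by writing down the relevant (co)chain groups explicitly, identifying the boundary maps $\partial^+_2=d_1+d_2$ and $\partial^+_3=d_1+d_2$ on the quandle chain complex of $R_3$, and then computing the cohomology of the resulting cochain complex $\mathrm{Hom}(C_*^{Q+}(R_3),\mathbf{Z})$ in degree $2$. Since $|R_3|=3$, the group $C_2^R(R_3)$ is free of rank $9$ on pairs $(i,j)$, and the degenerate subgroup $C_2^D(R_3)$ is free of rank $3$ on the diagonal pairs $(i,i)$; hence $C_2^Q(R_3)$ is free of rank $6$, generated by the off-diagonal pairs. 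Likewise $C_1^Q(R_3)=C_1^R(R_3)$ is free of rank $3$, and $C_3^Q(R_3)$ is free of rank $3^3-(\text{number of degenerate triples})$; I would enumerate these once and for all.

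First I would write out $\partial^+_2(i,j)=d_1(i,j)+d_2(i,j)=-(j)+(i)-(i\ast j)+(j)=(i)-(i\ast j)=(i)-(2j-i)$, observing the cancellation of the $(j)$ terms, so the image of $\partial^+_2$ is spanned by the differences $(i)-(2j-i\bmod 3)$. Next I would compute $\partial^+_3(i,j,k)=d_1(i,j,k)+d_2(i,j,k)$, carefully collecting the six terms and reducing them modulo the degenerate subgroup; the cocycle condition this yields is exactly the positive quandle $2$-cocycle condition displayed at the end of Section~3, namely that a function $\phi\colon R_3\times R_3\to\mathbf{Z}$ (vanishing on the diagonal) is a cocycle iff $\phi(a,c)+\phi(a\ast b,c)-\phi(a,b)-\phi(a\ast c,b\ast c)-2\phi(b,c)=0$ for all $a,b,c$. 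I would then set up this as a finite linear system over $\mathbf{Z}$ in the six unknowns $\phi(i,j)$ with $i\neq j$, solve it to find the group $Z^2$ of cocycles, compute the coboundary group $B^2=\mathrm{im}\,\delta^1$ where $\delta^1 f(i,j)=f(\partial^+_2(i,j))=f(i)-f(2j-i)$, and take the quotient, which I expect to come out to $\mathbf{Z}_3$.

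The main obstacle I anticipate is purely bookkeeping: correctly handling the interaction between the boundary maps and the degenerate subgroup, i.e.\ making sure that when a term like $(a\ast c, b\ast c)$ or $(a\ast b,\dots)$ collapses to the diagonal it is discarded in $C_*^Q$, and getting the signs right in $\partial^+_3$ after the $d_1$-part produces cancellations analogous to those in $\partial^+_2$. There is also the subtlety that the $\mathbf{Z}_3$ torsion must come from the cokernel of an integer matrix rather than being visible over $\mathbf{Q}$, so I would diagonalize the relevant integer matrix (Smith normal form on a $6\times(\text{something})$ matrix) rather than just rank-counting. A useful sanity check along the way is Proposition~3.3: reducing mod $2$ must recover $H^2_Q(R_3;\mathbf{Z}_2)$, and the known value $H^2_Q(R_3;\mathbf{Z})\cong 0$ together with $H^1_Q(R_3;\mathbf{Z})$ should let me cross-check the coboundary rank via the universal coefficient theorem (Theorem~4.1), so any arithmetic slip will be caught.
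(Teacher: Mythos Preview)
Your overall strategy---enumerate the six off-diagonal generators of $C_2^Q(R_3)$, solve the positive $2$-cocycle equations for $Z^2$, compute the image of $\delta^1$, and take the quotient---is exactly what the paper does. However, there is a concrete error in your boundary formula that would derail the computation of $B^2$.

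You write $\partial^+_2(i,j)=-(j)+(i)-(i\ast j)+(j)=(i)-(i\ast j)$, claiming the two $(j)$ terms cancel. They do not: from the paper's definitions, $d_1(a,b)=-(b)+(a)$ and $d_2(a,b)=-(b)+(a\ast b)$, so
\[
\partial_2^+(a,b)=d_1(a,b)+d_2(a,b)=(a)+(a\ast b)-2(b),
\]
and accordingly $(\delta^1_+ f)(a,b)=f(a)+f(a\ast b)-2f(b)$, not $f(a)-f(a\ast b)$. What you wrote is $\partial^-_2=d_1-d_2$, the ordinary quandle boundary; with that coboundary you would be computing $H^2_Q(R_3)\cong 0$ rather than $H^2_{Q+}(R_3)$. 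The $-2f(b)$ term is precisely what makes the positive theory differ from the negative one and is the source of the $3$-torsion here (indeed the paper finds $B^2$ is the index-$3$ sublattice of $Z^2\cong\mathbf Z^2$ generated by $\delta\chi_0,\delta\chi_1$, and the relation $3(\chi_{(0,1)}+\chi_{(2,1)}-\chi_{(1,0)}-\chi_{(2,0)})=0$ comes exactly from those $-2$ coefficients).

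You would in fact catch this yourself: with the mismatched pair (correct positive $2$-cocycle condition but negative $\delta^1$), your ``coboundaries'' would fail to be cocycles, so $\delta^2\delta^1\neq 0$ and the sanity check you propose would flag the inconsistency. Once you fix $\partial_2^+$, the rest of your plan goes through and matches the paper's argument essentially verbatim.
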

\begin{proof}
Recall that $R_3=\{0, 1, 2\}$ with quandle operations $i\ast j=2j-i$ $($mod 3$)$. Choose a positive quandle 2-cocycle $\phi\in Z^2_{Q+}(R_3)$. We assume that $\phi=\sum\limits_{i, j\in \{0, 1, 2\}}c_{(i, j)}\chi_{(i, j)}$, here $\chi_{(i, j)}$ denotes the characteristic function
\begin{center}
$\chi_{(i, j)}(k, l)=
\begin{cases}
1,& \text{if}\ (i, j)=(k, l);\\
0,& \text{if}\ (i, j)\neq(k, l).
\end{cases}$
\end{center}
Recall that $\phi(i, i)=0$, i.e. $c_{(i, i)}=0$.

Next we need to investigate the positive quandle 2-cocycle conditions
\begin{center}
$-\phi(j, k)-\phi(j, k)+\phi(i, k)+\phi(i\ast j, k)-\phi(i, j)-\phi(i\ast k, j\ast k)=0$
\end{center}
for all triples $(i, j, k)$ from $\{0, 1, 2\}$. There are totally 12 equations on $c_{(i, j)}$.
\begin{center}
$\begin{cases}
-2c_{(1, 0)}+c_{(2, 0)}-c_{(0, 1)}-c_{(0, 2)}=0 \\
-2c_{(2, 0)}+c_{(1, 0)}-c_{(0, 2)}-c_{(0, 1)}=0 \\
-2c_{(0, 1)}+c_{(2, 1)}-c_{(1, 0)}-c_{(1, 2)}=0 \\
-2c_{(2, 1)}+c_{(0, 1)}-c_{(1, 2)}-c_{(1, 0)}=0 \\
-2c_{(0, 2)}+c_{(1, 2)}-c_{(2, 0)}-c_{(2, 1)}=0 \\
-2c_{(1, 2)}+c_{(0, 2)}-c_{(2, 1)}-c_{(2, 0)}=0 \\
-2c_{(1, 2)}+c_{(0, 2)}-c_{(0, 1)}-c_{(1, 0)}=0 \\
-2c_{(2, 1)}+c_{(0, 1)}-c_{(0, 2)}-c_{(2, 0)}=0 \\
-2c_{(0, 2)}+c_{(1, 2)}-c_{(1, 0)}-c_{(0, 1)}=0 \\
-2c_{(2, 0)}+c_{(1, 0)}-c_{(1, 2)}-c_{(2, 1)}=0 \\
-2c_{(0, 1)}+c_{(2, 1)}-c_{(2, 0)}-c_{(0, 2)}=0 \\
-2c_{(1, 0)}+c_{(2, 0)}-c_{(2, 1)}-c_{(1, 2)}=0
\end{cases}$
\end{center}
After simplifying the equations above we obtain
\begin{center}
$\begin{cases}
c_{(0, 1)}=z\\
c_{(1, 0)}=-y-z\\
c_{(0, 2)}=y\\
c_{(2, 0)}=-y-z\\
c_{(1, 2)}=y\\
c_{(2, 1)}=z
\end{cases}$
\end{center}
Here we put $c_{(1, 2)}=y$ and $c_{(2, 1)}=z$. Hence the positive quandle 2-cocycle
\begin{center}
$\phi=y(\chi_{(0, 2)}+\chi_{(1, 2)}-\chi_{(1, 0)}-\chi_{(2, 0)})+z(\chi_{(0, 1)}+\chi_{(2, 1)}-\chi_{(1, 0)}-\chi_{(2, 0)})$.
\end{center}
On the other hand, we have
\begin{center}
$\begin{cases}
\delta\chi_0=(\chi_{(0, 2)}+\chi_{(1, 2)}-\chi_{(1, 0)}-\chi_{(2, 0)})+(\chi_{(0, 1)}+\chi_{(2, 1)}-\chi_{(1, 0)}-\chi_{(2, 0)})\\
\delta\chi_1=(\chi_{(1, 0)}+\chi_{(2, 0)}-\chi_{(0, 1)}-\chi_{(2, 1)})+(\chi_{(0, 2)}+\chi_{(1, 2)}-\chi_{(0, 1)}-\chi_{(2, 1)})\\
\delta\chi_2=(\chi_{(0, 1)}+\chi_{(2, 1)}-\chi_{(0, 2)}-\chi_{(1, 2)})+(\chi_{(1, 0)}+\chi_{(2, 0)}-\chi_{(0, 2)}-\chi_{(1, 2)}).
\end{cases}$
\end{center}

Since
\begin{center}
$\phi=y(\delta\chi_0)+(z-y)(\chi_{(0, 1)}+\chi_{(2, 1)}-\chi_{(1, 0)}-\chi_{(2, 0)})$,
\end{center}
then
\begin{center}
$H_{Q+}^2(R_3)\cong\{\chi_{(0, 1)}+\chi_{(2, 1)}-\chi_{(1, 0)}-\chi_{(2, 0)}\mid \delta\chi_0, \delta\chi_1\}$
\end{center}
From $\delta\chi_0=\delta\chi_1=0$ one can easily deduce that $3(\chi_{(0, 1)}+\chi_{(2, 1)}-\chi_{(1, 0)}-\chi_{(2, 0)})=0$. It follows that $H_{Q+}^2(R_3)\cong \mathbf{Z}_3.$
\end{proof}

We remark that the second quandle cohomology group of $R_3$ is trivial, $H_Q^2(R_3; \mathbf{Z})\cong 0$ \cite{Car2003}.

According to the definition $C_n^Q(X)=C_n^R(X)/C_n^D(X)$, there is a short exact sequence
\begin{center}
$0\rightarrow C_\ast^D(X)\rightarrow C_\ast^R(X)\rightarrow C_\ast^Q(X)\rightarrow0$
\end{center}
of chain complexes, it follows that there is a long exact sequence of homology groups
\begin{center}
$\cdots\rightarrow H_n^D(X)\rightarrow H_n^R(X)\rightarrow H_n^Q(X)\rightarrow H_{n-1}^D(X)\rightarrow\cdots$
\end{center}
In \cite{Car2001J}, it was conjectured that the short exact sequence of chain complexes above is split. Later R.A. Litherland and S. Nelson gave an affirmative answer to this conjecture in \cite{Lit2003}. The following theorem says that the splitting map defined by R.A. Litherland and S. Nelson still works in positive quandle homology theory.
\begin{theorem}
For a given quandle $X$, there exists a short exact sequence
\begin{center}
$0\rightarrow H_n^{D+}(X)\rightarrow H_n^{R+}(X)\rightarrow H_n^{Q+}(X)\rightarrow0$.
\end{center}
\end{theorem}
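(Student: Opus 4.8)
The plan is to reduce the statement to the splitting of a short exact sequence of chain complexes and then to reuse, essentially verbatim, the splitting constructed by Litherland and Nelson. By Lemma 3.2 the groups $C_n^D(X)$ form a subcomplex of $C_n^R(X)$ for \emph{each} of $d_1,d_2$, hence also for $\partial^+=d_1+d_2$; combined with $C_n^Q(X)=C_n^R(X)/C_n^D(X)$ this gives the short exact sequence of chain complexes
\begin{center}
$0\rightarrow C_*^{D+}(X)\rightarrow C_*^{R+}(X)\rightarrow C_*^{Q+}(X)\rightarrow0$,
\end{center}
and hence a long exact sequence $\cdots\to H_n^{D+}(X)\to H_n^{R+}(X)\to H_n^{Q+}(X)\xrightarrow{\ \partial_*\ } H_{n-1}^{D+}(X)\to\cdots$. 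So it suffices to exhibit a chain-complex retraction $r_*\colon C_*^{R+}(X)\to C_*^{D+}(X)$ with $r_*|_{C_*^{D+}(X)}=\mathrm{id}$: this splits the sequence, forces every connecting map $\partial_*$ to vanish, and gives $C_*^{R+}(X)\cong C_*^{D+}(X)\oplus C_*^{Q+}(X)$, from which the split short exact sequence in each degree follows at once.

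For the retraction I would take the one Litherland and Nelson use in \cite{Lit2003} for the ordinary rack complex $\{C_*^R(X),\partial^-\}$. Their map is defined combinatorially from the quandle operation according to the pattern of repeated consecutive entries of a tuple, and it restricts to the identity on $C_*^D(X)$ by construction; the only substantive point is that it commutes with the boundary. That verification uses self-distributivity only through the structural identities collected in Lemma 3.1, namely $d_1^2=d_2^2=d_1d_2+d_2d_1=0$, together with elementary naturality properties of the operators that define $d_1$ and $d_2$. Since Lemma 3.1 holds verbatim for $\partial^+$, I expect the same map --- possibly after attaching appropriate signs to some of its terms to absorb the change from $d_1-d_2$ to $d_1+d_2$ --- to commute with $\partial^+$ as well, and then $r_*$ splits the sequence above.

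The concrete steps are thus: (i) recall the Litherland--Nelson retraction and isolate the handful of commutation identities between it and the operators entering $d_1$ and $d_2$; (ii) check that these identities persist when $d_1-d_2$ is replaced by $d_1+d_2$, adjusting signs in the definition of $r_*$ as needed so that $r_{n-1}\partial^+=\partial^+ r_n$; (iii) note that $r_*|_{C_*^{D+}(X)}=\mathrm{id}$ is unaffected; (iv) deduce the splitting and read off $0\to H_n^{D+}(X)\to H_n^{R+}(X)\to H_n^{Q+}(X)\to0$. The main obstacle is (ii): one must determine whether the Litherland--Nelson map already commutes with $d_1$ and $d_2$ \emph{separately} --- in which case it works for $\partial^+$ with no modification --- or only with the alternating combination $d_1-d_2$, in which case one must produce the correctly signed analogue and repeat their sign bookkeeping. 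In either case this is a matter of sign tracking rather than a new idea, which is exactly why the result runs parallel to the negative (classical) case.
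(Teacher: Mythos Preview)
Your approach is essentially identical to the paper's: both take the short exact sequence of chain complexes and split it via the Litherland--Nelson retraction $w_n(c)=c-\alpha_n(c)$, where $\alpha_n(a_1,\dots,a_n)=(a_1,a_2-a_1,\dots,a_n-a_{n-1})$. The paper resolves your hedging in step~(ii): the map works \emph{unchanged} for $\partial^+$, and the verification that $\partial^+\alpha_{n+1}=\alpha_n\partial^+$ is carried out by a direct induction on $n$ using explicit identities for $\alpha_n(d_i(a_1,\dots,a_n),a_n)$ rather than merely the anticommutation relations of Lemma~3.1.
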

\begin{proof}
According to the definition of positive homology groups there exists a short exact sequence
\begin{center}
$0\rightarrow C_\ast^{D+}(X)\xrightarrow{u_\ast} C_\ast^{R+}(X)\xrightarrow{v_\ast} C_\ast^{Q+}(X)\rightarrow0$.
\end{center}
It suffices to find a chain map $w_n: C_n^{R+}(X)\rightarrow C_n^{D+}(X)$ such that $w_n\circ u_n=id$. Here we use the splitting map $w_n(c)=c-\alpha_n(c)$ introduced by R.A. Litherland and S. Nelson in \cite{Lit2003}, $c\in C_n^{R+}(X)$, and $\alpha_n$ is defined by $\alpha_n(a_1, \cdots, a_n)=(a_1, a_2-a_1, \cdots, a_n-a_{n-1})$ on $n-$tuples and extending linearly to $C_n^{R+}(X)$. The following two relationships will be frequently used during the proof, which also can be found in \cite{Lit2003}. Note that the notation we use here is a bit different from that in \cite{Lit2003}.
\begin{itemize}
\item $\partial^+(a_1, \cdots, a_{n+1})=(\partial^+(a_1, \cdots, a_n), a_{n+1})+(-1)^{n+1}((a_1, \cdots, a_n)+(a_1, \cdots, a_n)\ast a_{n+1})$, here the notation $(a_1, \cdots, a_n)\ast a_{n+1}$ denotes $(a_1\ast a_{n+1}, \cdots, a_{n}\ast a_{n+1})$.
\item $\alpha_{n+1}(a_1, \cdots, a_{n+1})=(\alpha_{n}(a_1, \cdots, a_n), a_{n+1})-(\alpha_n(a_1, \cdots, a_n), a_n)$. Generally, we write
\begin{center}
$\alpha_{n+1}(c, a_{n+1})=(\alpha_n(c), a_{n+1})-(\alpha_n(c), l(c))$,
\end{center}
here $c\in C_n^{R+}(X)$ and $l(c)\in C_1^{R+}(X)$. In particular $l(a_1, \cdots, a_n)=a_n$.
\end{itemize}

First we show that $c-\alpha_n(c)\in C_n^{D+}(X)$ and $w_n\circ u_n=id$. In order to prove $c-\alpha_n(c)\in C_n^{D+}(X)$ it is sufficient to consider the case $c=(a_1, \cdots, a_n)\in C_n^{R+}(X)$. Note that $a_1-\alpha_1(a_1)=a_1-a_1=0\in C_1^{D+}(X)$ and $(a_1, a_2)-\alpha_2(a_1, a_2)=(a_1, a_2)-(a_1, a_2) +(a_1, a_1)=(a_1, a_1)\in C_2^{D+}(X)$. Suppose $c-\alpha_n(c)\in C_n^{D+}(X)$ for some $n$, consider
\begin{flalign*}
&(a_1, \cdots, a_{n+1})-\alpha_{n+1}(a_1, \cdots, a_{n+1})&\\
=&(a_1, \cdots, a_{n+1})-(\alpha_n(a_1, \cdots, a_n), a_{n+1})+(\alpha_n(a_1, \cdots, a_n), a_n)\\
=&(a_1, \cdots, a_{n+1})-(\alpha_n(a_1, \cdots, a_n), a_{n+1})-(a_1, \cdots, a_n, a_n)+(\alpha_n(a_1, \cdots, a_n), a_n)+(a_1, \cdots, a_n, a_n)\\
=&((a_1, \cdots, a_n)-\alpha_n(a_1, \cdots, a_n), a_{n+1})-((a_1, \cdots, a_n)-\alpha_n(a_1, \cdots, a_n), a_n)+(a_1, \cdots, a_n, a_n)\\
\in& C_n^{D+}(X).
\end{flalign*}
In order to show that $w_n\circ u_n=id$, choose $c=(a_1, \cdots, a_i, a_{i+1}, \cdots, a_n)\in C_n^{D+}(X)$, where $a_i=a_{i+1}$, it suffices to prove that $\alpha_n(c)=0$. In fact
\begin{center}
$\alpha_n(c)=(a_1, a_2-a_1, \cdots, a_{i+1}-a_i, \cdots, a_n-a_{n-1})=0$.
\end{center}

Next we shoe that $w_n: C_n^{R+}(X)\rightarrow C_n^{D+}(X)$ is a chain map. We need the two equalities below $(n\geq2)$:
\begin{flalign*}
\alpha_n(d_1(a_1, \cdots, a_n), a_n)=&\alpha_n(-(a_2, \cdots, a_n, a_n)+\cdots+(-1)^n(a_1, \cdots, a_n))&\\
=&(-1)^n\alpha_n(a_1, \cdots, a_n)
\end{flalign*}
\begin{flalign*}
\alpha_n(d_2(a_1, \cdots, a_n), a_n)=&\alpha_n(\sum\limits_{i=1}^n(-1)^i(a_1\ast a_i, \cdots, a_{i-1}\ast a_i, a_{i+1}, \cdots, a_n, a_n))&\\
=&(-1)^n\alpha_n((a_1, \cdots, a_n)\ast a_n)
\end{flalign*}

Now we show that $\partial_{n+1}^+\alpha_{n+1}-\alpha_n\partial_{n+1}^+=0$. First note that
\begin{center}
$\partial_2^+\alpha_2(a_1, a_2)=-(a_2)-(a_2)+(a_1)+(a_1\ast a_2)=\alpha_1\partial_2^+(a_1, a_2)$.
\end{center}
Assume $\partial_{n+1}^+\alpha_{n+1}-\alpha_n\partial_{n+1}^+=0$ holds for some $n\geq 2$, one computes
\begin{flalign*}
&\partial_{n+1}^+\alpha_{n+1}(a_1, \cdots, a_{n+1})-\alpha_n\partial_{n+1}^+(a_1, \cdots, a_{n+1})&\\
=&\partial_{n+1}^+((\alpha_n(a_1, \cdots, a_n), a_{n+1})-(\alpha_n(a_1, \cdots, a_n), a_n))\\
&-\alpha_n((\partial_n^+(a_1, \cdots, a_n), a_{n+1})+(-1)^{n+1}(a_1, \cdots, a_n)+(-1)^{n+1}(a_1, \cdots, a_n)\ast a_{n+1})\\
=&(\partial_n^+\alpha_n(a_1, \cdots, a_n), a_{n+1})+(-1)^{n+1}\alpha_n(a_1, \cdots, a_n)+(-1)^{n+1}\alpha_n(a_1, \cdots, a_n)\ast a_{n+1}\\
&-(\partial_n^+\alpha_n(a_1, \cdots, a_n), a_n)-(-1)^{n+1}\alpha_n(a_1, \cdots, a_n)-(-1)^{n+1}\alpha_n(a_1, \cdots, a_n)\ast a_n\\
&-(\alpha_{n-1}\partial_n^+(a_1, \cdots, a_n), a_{n+1})-(\alpha_{n-1}\partial_n^+(a_1, \cdots, a_n), l(\partial_n^+(a_1, \cdots, a_n)))\\
&-(-1)^{n+1}\alpha_n(a_1, \cdots, a_n)-(-1)^{n+1}\alpha_n((a_1, \cdots, a_n)\ast a_{n+1})\\
=&-(\alpha_{n-1}\partial_n^+(a_1, \cdots, a_n), a_n)-(-1)^{n+1}\alpha_n(a_1, \cdots, a_n)\\
&-(-1)^{n+1}\alpha_n(a_1, \cdots, a_n)\ast a_n-(\alpha_{n-1}\partial_n^+(a_1, \cdots, a_n), l(\partial_n^+(a_1, \cdots, a_n)))\\
=&-\alpha_n(\partial_n^+(a_1, \cdots, a_n), a_n)-(-1)^{n+1}\alpha_n(a_1, \cdots, a_n)-(-1)^{n+1}\alpha_n(a_1, \cdots, a_n)\ast a_n\\
=&-\alpha_n((d_1+d_2)(a_1, \cdots, a_n), a_n)-(-1)^{n+1}\alpha_n(a_1, \cdots, a_n)-(-1)^{n+1}\alpha_n(a_1, \cdots, a_n)\ast a_n\\
=&-(-1)^n\alpha_n(a_1, \cdots, a_n)-(-1)^n\alpha_n(a_1, \cdots, a_n)\ast a_n\\
&-(-1)^{n+1}\alpha_n(a_1, \cdots, a_n)-(-1)^{n+1}\alpha_n(a_1, \cdots, a_n)\ast a_n\\
=&0
\end{flalign*}
\end{proof}

Now we investigate $H_1^{Q+}(X)$ and $H_2^{Q+}(X)$ for general quandle $X$. The similar results of quandle homology groups can be found in \cite{Car2001J} and \cite{Kam2002}. Assume $X=\{a_1, \cdots, a_n\}$, according to the definitions of $d_1$ and $d_2$ we have $Z_1^{Q+}(X)=C_1^{Q+}(X)=C_1^{R+}(X)$, i.e. the free abelian group generated by $\{a_1, \cdots, a_n\}$. Since $\partial_2^+(a, b)=-b-b+a+a\ast b$, we conclude that
\begin{center}
$H_1^{Q+}(X)\cong\{a_1, \cdots, a_n\mid a_i\ast a_j=2a_j-a_i\}$.
\end{center}
\begin{proposition}
$H_1^{Q+}(T_n)\cong\mathbf{Z}\bigoplus(\bigoplus\limits_{n-1}\mathbf{Z}_2)$ and $H_1^{Q+}(R_n)\cong\mathbf{Z}\bigoplus \mathbf{Z}_n$.
\end{proposition}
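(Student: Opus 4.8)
The plan is to use the presentation $H_1^{Q+}(X)\cong\{a_1,\dots,a_n\mid a_i\ast a_j=2a_j-a_i\}$ obtained just above the statement, and then, in each of the two cases, to put the resulting finitely presented abelian group into normal form by an explicit change of basis in the free abelian group $\mathbf{Z}\langle a_1,\dots,a_n\rangle$ on the generators.

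For $T_n$ the operation is $a_i\ast a_j=a_i$, so every defining relation reads $2a_i-2a_j=0$, that is $2(a_i-a_j)=0$. First I would pass to the basis $a_1,\;a_2-a_1,\;\dots,\;a_n-a_1$ of $\mathbf{Z}^n$; since $a_i-a_j=(a_i-a_1)-(a_j-a_1)$, the subgroup generated by all the $2(a_i-a_j)$ coincides with the one generated by $2(a_k-a_1)$ for $k=2,\dots,n$. The quotient therefore splits as $\mathbf{Z}\langle a_1\rangle\oplus\bigoplus_{k=2}^{n}(\mathbf{Z}/2)\langle a_k-a_1\rangle$, which is $\mathbf{Z}\oplus\bigoplus_{n-1}\mathbf{Z}_2$.

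For $R_n=\{0,1,\dots,n-1\}$ with $i\ast j=2j-i\pmod n$, write $e_0,\dots,e_{n-1}$ for the free generators, so the relators are $r_{i,j}=e_{2j-i\bmod n}-2e_j+e_i$. I would first change basis to $e_0,f_1,\dots,f_{n-1}$ with $f_k:=e_k-e_0$ (setting $f_0:=0$); substituting, each $r_{i,j}$ becomes $f_{2j-i\bmod n}-2f_j+f_i$, the $e_0$ term cancelling. Hence $H_1^{Q+}(R_n)\cong\mathbf{Z}\langle e_0\rangle\oplus(F/R)$, where $F$ is free on $f_1,\dots,f_{n-1}$ and $R$ is generated by the $r_{i,j}$, and it remains to show $F/R\cong\mathbf{Z}/n$. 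For a surjection $F/R\twoheadrightarrow\mathbf{Z}/n$ I would use $f_k\mapsto k\pmod n$, which annihilates every $r_{i,j}$ since $(2j-i)-2j+i\equiv0$. For the reverse direction I would prove by induction on $k$ that $f_k=kf_1$ in $F/R$: the relator $r_{k-2,\,k-1}$ gives $f_k=2f_{k-1}-f_{k-2}$ for $2\le k\le n-1$, and the case $k=n$ of the same relator (where $2(n-1)-(n-2)\equiv0$) gives $0=2f_{n-1}-f_{n-2}=nf_1$. Thus $F/R$ is cyclic on $f_1$ with $nf_1=0$, and combined with the surjection this yields $F/R\cong\mathbf{Z}/n$, so $H_1^{Q+}(R_n)\cong\mathbf{Z}\oplus\mathbf{Z}_n$.

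All of this is routine $\mathbf{Z}$-linear algebra once the presentation is available. The only step needing a little care is the dihedral computation: one must both produce the map onto $\mathbf{Z}/n$ and verify that the ``wrap-around'' relator $r_{n-2,\,n-1}$ contributes exactly the relation $nf_1=0$ (and that no other combination of relators collapses $F/R$ further), so that $F/R$ is genuinely cyclic of order $n$ rather than of a proper divisor of $n$. The degenerate small cases $n=1,2$, where $R_n$ is a trivial quandle, can be checked directly as a sanity test.
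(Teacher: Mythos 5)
Your proposal is correct and follows essentially the same route as the paper: both start from the presentation $H_1^{Q+}(X)\cong\{a_1,\dots,a_n\mid a_i\ast a_j=2a_j-a_i\}$ and reduce it by the change of basis $a_1,\ a_i-a_1$ (resp.\ $e_0,\ e_k-e_0$). The only difference is that you spell out the dihedral reduction in detail (the surjection onto $\mathbf{Z}/n$, the induction $f_k=kf_1$, and the wrap-around relation $nf_1=0$), which the paper compresses into the single asserted isomorphism $\{a_0,\,a_1-a_0\mid n(a_1-a_0)=0\}\cong\mathbf{Z}\bigoplus\mathbf{Z}_n$.
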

\begin{proof}
According to the analysis above, we have
\begin{center}
$H_1^{Q+}(T_n)\cong\{a_1, \cdots, a_n\mid 2a_i=2a_j\}\cong\{a_1, a_2-a_1, \cdots, a_n-a_1\mid 2(a_i-a_1)=0\}\cong\mathbf{Z}\bigoplus(\bigoplus\limits_{n-1}\mathbf{Z}_2)$.
\end{center}

For the dihedral quandle $R_n=\{a_0, \cdots, a_{n-1}\}$ with quandle operations $a_i\ast a_j=a_{2j-i\ (mod\ n)}$, we have
\begin{center}
$H_1^{Q+}(R_n)\cong\{a_0, \cdots, a_{n-1}\mid a_{2j-i\ (mod\ n)}=2a_j-a_i\}\cong\{a_0, a_1-a_0\mid n(a_1-a_0)=0\}\cong\mathbf{Z}\bigoplus \mathbf{Z}_n$.
\end{center}
\end{proof}

Next we study the second positive degeneration homology $H_2^{D+}(X)$. Given a quandle $X$ and $\{a, b\}\in X$, we define $a\sim b$ if there exists some elements $a_1, \cdots, a_n$ of $X$ such that $b=(\cdots ((a\ast^{\varepsilon_1} a_1)\ast^{\varepsilon_2}a_2)\cdots )\ast^{\varepsilon_n} a_n$, where $\varepsilon_i\in \{\pm1\}$. The \emph{orbits} of $X$ are defined to be the set of equivalence classes of $X$ by $\sim$. We denote it by Orb$(X)$, and as usual the number of elements in Orb$(X)$ is denoted by $|$Orb$(X)|$. Since $\partial^+(a, a)=-a-a+a+a=0$, and
\begin{flalign*}
&\partial^+(a, a, b)=-2(a, b)+(a, b)+(a, b)-(a, a)-(a\ast b, a\ast b)=-(a, a)-(a\ast b, a\ast b),\\
&\partial^+(a, b, b)=-2(b, b)+(a, b)+(a\ast b, b)-(a, b)-(a\ast b, b)=-2(b, b).
\end{flalign*}
Combining with Theorem 4.3, it follows that
\begin{proposition}
$H_2^{D+}(X)\cong\bigoplus\limits_{|Orb(X)|}\mathbf{Z}_2$ and $H_2^{R+}(X)\cong H_2^{Q+}(X)\bigoplus(\bigoplus\limits_{|Orb(X)|}\mathbf{Z}_2)$.
\end{proposition}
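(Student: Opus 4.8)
The plan is to compute $H_2^{D+}(X)$ by hand from the very small complex $C_\ast^{D+}(X)$, and then to read off $H_2^{R+}(X)$ from the splitting already established in Theorem 4.3. Since $C_1^D(X)=0$, the map $\partial_2^+\colon C_2^{D+}(X)\to C_1^{D+}(X)$ is zero, so the group of cycles $Z_2^{D+}(X)$ is all of $C_2^{D+}(X)$, the free abelian group on the diagonal pairs $(a,a)$, $a\in X$. The group $C_3^{D+}(X)$ is generated by the triples $(a,a,b)$ and $(a,b,b)$, and the values of $\partial^+$ on these generators are exactly the two identities displayed just before the statement, namely $\partial^+(a,a,b)=-(a,a)-(a\ast b,a\ast b)$ and $\partial^+(a,b,b)=-2(b,b)$. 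Hence the boundary subgroup $B_2^{D+}(X)$ is generated by all $(a,a)+(a\ast b,a\ast b)$ and all $2(b,b)$, for $a,b\in X$.

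The next step is to identify the quotient. Via the isomorphism $C_2^{D+}(X)\cong\mathbf{Z}\langle X\rangle$, $(a,a)\mapsto a$, we get
\[
H_2^{D+}(X)\;\cong\;\mathbf{Z}\langle X\rangle\,\big/\,\bigl\langle\, a+a\ast b,\ \ 2b \ :\ a,b\in X\,\bigr\rangle .
\]
The relations $2b=0$ make the right-hand side a $\mathbf{Z}_2$-vector space on $X$, in which $a+a\ast b=0$ is the same as $a=a\ast b$. So $H_2^{D+}(X)$ is $\mathbf{Z}_2\langle X\rangle$ modulo the identifications $a=a\ast b$, and it remains to check that the equivalence relation on $X$ generated by $a\sim a\ast b$ coincides with the orbit relation of the paper: one containment is immediate from the definition of $\mathrm{Orb}(X)$, and for the reverse one uses that $a\ast^{-1}b=c$ is equivalent to $c\ast b=a$, so that $a$ is automatically identified with $a\ast^{-1}b$ as well and the generated relation is closed under all the operations $\ast^{\pm1}$. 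Hence $H_2^{D+}(X)$ has exactly one $\mathbf{Z}_2$ summand per orbit, giving the first isomorphism.

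For the second isomorphism I would invoke Theorem 4.3: the chain map $w_n$ constructed there satisfies $w_n\circ u_n=\mathrm{id}$, so it induces a retraction on homology and the short exact sequence $0\to H_2^{D+}(X)\to H_2^{R+}(X)\to H_2^{Q+}(X)\to 0$ splits, whence $H_2^{R+}(X)\cong H_2^{D+}(X)\oplus H_2^{Q+}(X)\cong H_2^{Q+}(X)\oplus\bigl(\bigoplus_{|\mathrm{Orb}(X)|}\mathbf{Z}_2\bigr)$. Everything above is short bookkeeping once the boundary formulas and Theorem 4.3 are in hand; the only point that really needs care is the last identification in the second paragraph — verifying that the relation generated by $a\sim a\ast b$ is not accidentally coarser than the orbit partition, so that the count of $\mathbf{Z}_2$ summands is precisely $|\mathrm{Orb}(X)|$.
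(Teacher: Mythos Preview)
Your proof is correct and follows the same approach as the paper. The paper's argument is extremely terse --- it records the three boundary computations $\partial^+(a,a)=0$, $\partial^+(a,a,b)=-(a,a)-(a\ast b,a\ast b)$, $\partial^+(a,b,b)=-2(b,b)$ displayed just before the proposition and then writes ``Combining with Theorem 4.3, it follows that'' --- whereas you spell out explicitly why these data force $H_2^{D+}(X)\cong\bigoplus_{|\mathrm{Orb}(X)|}\mathbf{Z}_2$ (including the check that the equivalence relation generated by $a\sim a\ast b$ coincides with the orbit relation) and how the splitting yields the second isomorphism. One tiny wording slip in your closing sentence: the potential danger is that the generated relation be strictly \emph{finer} than the orbit partition (which would give too many $\mathbf{Z}_2$ summands), not coarser; your actual argument handles exactly this point.
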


In the end of this section let us turn to the trivial quandle $T_n$. In quandle homology theory, the boundary operators of $T_n$ are trivial, therefore $H_n^Q(T_n)\cong C_n^Q(T_n)$. However in positive quandle homology theory, the boundary operators are not trivial in general. In fact we have the following proposition.
\begin{proposition}
$H_i^{Q+}(T_n)\cong
\begin{cases}
\mathbf{Z}\bigoplus(\bigoplus\limits_{n-1}\mathbf{Z}_2),& \ i=1;\\
\bigoplus\limits_{(n-1)^i}\mathbf{Z}_2,& \ i\geq2,
\end{cases}$
and
$H^i_{Q+}(T_n)\cong
\begin{cases}
\mathbf{Z},& \ i=1;\\
\bigoplus\limits_{(n-1)^{i-1}}\mathbf{Z}_2,& \ i\geq2.
\end{cases}$
\end{proposition}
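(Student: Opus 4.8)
The plan is to exploit the fact that for the trivial quandle the two face operators coincide, so the positive boundary degenerates to twice a single, very tractable differential; everything then reduces to a rank count and standard homological algebra. First, since $a_j\ast a_i=a_j$ for all $i,j$ in $T_n$, we have $d_2=d_1$ on $C_*^R(T_n)$, hence $\partial^+=d_1+d_2=2d_1$ on the quotient complex $C_*^{Q+}(T_n)$ (while $\partial^-=0$, which explains the known identity $H_*^Q(T_n)\cong C_*^Q(T_n)$). Thus the positive quandle chain complex of $T_n$ is
\[
\cdots\xrightarrow{\,2d_1\,}C_3^{Q}(T_n)\xrightarrow{\,2d_1\,}C_2^{Q}(T_n)\xrightarrow{\,2d_1\,}C_1^{Q}(T_n)\longrightarrow 0,
\]
with each $C_k^{Q}(T_n)$ free abelian on the non-degenerate $k$-tuples (no two consecutive entries equal), so $\mathrm{rank}\,C_k^{Q}(T_n)=n(n-1)^{k-1}$. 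The case $i=1$ of both formulas can be dispatched at once: $H_1^{Q+}(T_n)$ is already Proposition 4.4, and then $H^1_{Q+}(T_n)$ follows from the Universal Coefficient Theorem (Theorem 4.1), since $H_0^{Q+}(T_n)$ is free abelian and $\mathrm{Hom}(\mathbf Z\oplus\bigoplus_{n-1}\mathbf Z_2,\mathbf Z)=\mathbf Z$.

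The substantive step is to show that the $d_1$-homology of $C_*^{Q}(T_n)$ vanishes above degree $1$, equivalently that the augmented complex
\[
\cdots\xrightarrow{\,d_1\,}C_2^{Q}(T_n)\xrightarrow{\,d_1\,}C_1^{Q}(T_n)\xrightarrow{\,\varepsilon\,}\mathbf Z\longrightarrow 0,\qquad\varepsilon(a_i)=1,
\]
is exact. I would prove this either by writing down an explicit contracting homotopy built from a fixed basepoint $a_1$ (sending a non-degenerate tuple $(b_1,\dots,b_k)$ to $\pm(a_1,b_1,\dots,b_k)$ when $b_1\neq a_1$ and to $0$ otherwise, and checking $d_1h+hd_1=\pm\mathrm{id}$), or by identifying this complex, up to sign and an index shift, with the normalized chain complex of the contractible simplicial set whose $k$-simplices are the $(k+1)$-tuples of $T_n$ (faces delete an entry, degeneracies repeat one; $C_k^D(T_n)$ is exactly the span of the degenerate tuples) and invoking the normalization theorem. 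Granting exactness, put $B_k=\mathrm{im}\bigl(d_1\colon C_{k+1}^{Q}(T_n)\to C_k^{Q}(T_n)\bigr)$: as a subgroup of a free group each $B_k$ is free, exactness gives $B_k=\ker\bigl(d_1\colon C_k^{Q}(T_n)\to C_{k-1}^{Q}(T_n)\bigr)$ for $k\geq2$ and $B_1=\ker\varepsilon$, and the recursion $\mathrm{rank}\,C_k^{Q}(T_n)=\mathrm{rank}\,B_k+\mathrm{rank}\,B_{k-1}$ with $\mathrm{rank}\,B_0=1$ forces $\mathrm{rank}\,B_k=(n-1)^k$.

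To assemble the homology, fix $k\geq2$: since $C_{k-1}^{Q}(T_n)$ is torsion-free, $\ker(2d_1)=\ker(d_1)=B_k$ and $\mathrm{im}(2d_1)=2B_k$, so $H_k^{Q+}(T_n)=B_k/2B_k\cong\bigoplus_{(n-1)^k}\mathbf Z_2$, as claimed. For the cohomology with $i\geq2$, the Universal Coefficient Theorem gives $H^i_{Q+}(T_n)\cong\mathrm{Ext}(H_{i-1}^{Q+}(T_n),\mathbf Z)\oplus\mathrm{Hom}(H_i^{Q+}(T_n),\mathbf Z)$; the $\mathrm{Hom}$ term vanishes because $H_i^{Q+}(T_n)$ is finite, and since $\mathrm{Ext}(\mathbf Z,\mathbf Z)=0$ and $\mathrm{Ext}(\mathbf Z_2,\mathbf Z)\cong\mathbf Z_2$, the $\mathrm{Ext}$ term equals $\bigoplus_{n-1}\mathbf Z_2$ for $i=2$ (using $H_1^{Q+}(T_n)\cong\mathbf Z\oplus\bigoplus_{n-1}\mathbf Z_2$) and $\bigoplus_{(n-1)^{i-1}}\mathbf Z_2$ for $i\geq3$. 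This yields exactly the stated groups.

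I expect the exactness of the augmented $d_1$-complex (the $d_1$-acyclicity of the normalized complex $C_*^{Q}(T_n)$) to be the only real obstacle; once it is in place, the remaining content is the factor-of-two bookkeeping, where one must be careful that passing from $d_1$ to $2d_1$ leaves kernels unchanged but doubles images, so the answer picks up precisely the predicted $2$-torsion, together with routine applications of the Universal Coefficient Theorem and attention to the degree-$0$ conventions.
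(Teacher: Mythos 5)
Your proposal is correct, and its skeleton matches the paper's: both arguments turn on the observation that $d_2=d_1$ on $C_*^R(T_n)$, hence $\partial^+=2d_1$ on $C_*^{Q}(T_n)$, both settle $i=1$ by Proposition 4.4, and both get the cohomology from the Universal Coefficient Theorem. Where you genuinely diverge is in how the homology of $(C_*^{Q}(T_n),2d_1)$ is extracted. The paper works by explicit generators and relations: for $i=2$ it decomposes cycles into closed loops of pairs and presents $Z_2^{Q+}(T_n)$ by the elements $(a_1,a_i)+(a_i,a_j)+(a_j,a_1)$, and for $i\geq 3$ it simply asserts that $H_i^{Q+}(T_n)$ is presented by the halves of $\partial^+_{i+1}(a_1,a_{j_1},\dots,a_{j_i})$ modulo the full boundaries, leaving the verification as ``not difficult to observe.'' You instead prove $d_1$-acyclicity of the normalized complex (via the prepend-$a_1$ contracting homotopy or the normalization theorem for the contractible simplicial set of tuples), count ranks to get $\operatorname{rank}B_k=(n-1)^k$, and conclude $H_k^{Q+}=B_k/2B_k$ from the torsion-freeness bookkeeping $\ker(2d_1)=\ker(d_1)$, $\operatorname{im}(2d_1)=2\operatorname{im}(d_1)$. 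This buys you two things: it supplies exactly the justification the paper compresses (your homotopy $h=$ ``prepend $a_1$'' is the structural reason the paper's generators are boundaries of tuples beginning with $a_1$), and it avoids the delicate indexing of an explicit generating set --- indeed, read literally, the paper's index range $2\leq j_k\leq n$ for all $k$ does not give a spanning set of $Z_i$ (one must allow later entries to return to $a_1$, only consecutive repeats being forbidden), whereas your rank argument is insensitive to this. The paper's approach, in turn, is more elementary and self-contained in degree $2$, exhibiting concrete cocycle representatives. One small point to keep in mind when you write up the homotopy route: the identity $d_1h+hd_1=\pm\mathrm{id}$ holds on the normalized complex only in degrees $\geq 2$ (in degree $1$ it produces the augmentation term), which is exactly why you should phrase exactness for the augmented complex, as you did.
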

\begin{proof}
It suffices to compute $H_i^{Q+}(T_n)$, $H_{Q+}^i(T_n)$ can be deduced from the universal coefficient theorem. For the case $i=1$, the result follows from Proposition 4.4.

Now we show that $H_2^{Q+}(T_n)\cong\bigoplus\limits_{(n-1)^2}\mathbf{Z}_2$, recall that $T_n=\{a_1, \cdots, a_n\}$ with quandle operations $a_i\ast a_j=a_i$. Notice that $\partial_2^+(a_i, a_j)=-2a_j+a_i+a_i\ast a_j=2(a_i-a_j)$, therefore any element $\psi\in Z_2^{Q+}(T_n)$ can be wrote as $\psi=\sum\limits_{i=1}^nc_i\psi_i$, where $\psi_i=(a_{i_1}, a_{i_2})+\cdots+(a_{i_{k-1}}, a_{i_k})+(a_{i_k}, a_{i_1})$. It follows that $Z_2^{Q+}(T_n)$ can be generated by
\begin{center}
$\{(a_i, a_j)+(a_j, a_i), (a_1, a_i)+(a_i, a_j)+(a_j, a_1)\}$ \quad $(1\leq i<j\leq n)$,
\end{center}
which is equivalent to
\begin{center}
$\{(a_1, a_i)+(a_i, a_j)+(a_j, a_1)\}$ \quad $(2\leq i\leq j\leq n)$.
\end{center}
On the other hand, since
\begin{center}
$\partial^+(a_i, a_j, a_k)=2(-(a_j, a_k)+(a_i, a_k)-(a_i, a_j))$ and $\partial^+(a_i, a_j, a_i)=2(-(a_j, a_i)-(a_i, a_j)),$
\end{center}
we have
\begin{flalign*}
H_2^{Q+}(T_n)\cong&\{(a_1, a_i)+(a_i, a_j)+(a_j, a_1)\mid 2((a_i, a_j)+(a_j, a_i)), 2((a_i, a_j)+(a_j, a_k)-(a_i, a_k))\}&\\
\cong&\{(a_1, a_i)+(a_i, a_j)+(a_j, a_1)\mid 2((a_1, a_i)+(a_i, a_j)+(a_j, a_1))\}\\
\cong&\bigoplus\limits_{(n-1)^2}\mathbf{Z}_2
\end{flalign*}

Similarly since $\partial_i^+=2d_1$ for $C_i^Q(T_n)$, it is not difficult to observe that $($here $2\leq j_k\leq n)$
\begin{flalign*}
H_i^{Q+}(T_n)\cong&\{\frac{1}{2}(\partial_{i+1}^+(a_1, a_{j_1}, \cdots, a_{j_i}))\mid \partial_{i+1}^+(a_1, a_{j_1}, \cdots, a_{j_i})\}&\\
\cong&\bigoplus\limits_{(n-1)^i}\mathbf{Z}_2
\end{flalign*}
\end{proof}

\section{Knot invariants derived from positive quandle cocycles}
\subsection{Positive quandle cocycle invariants for knots}
One of the most important applications of quandle cohomology groups is that one can define knot invariants via quandle 2-cocycles and knotted surface invariants via quandle 3-cocycles. In this section we will show that positive quandle 2-cocycles can also be used to define knot invariants, which is similar to the definition of quandle cocycle invariants introduced in \cite{Car2003}.

Let $K$ be a oriented knot diagram and $X$ a finite quandle. Assume $G$ is an abelian group and $\phi\in H_{Q+}^2(X; G)$ is a positive quandle 2-cocycle. It is well-known that all regions of $R^2-K$ can be colored with white and black in checkerboard fashion such that the unbounded region gets the white color. For each crossing point $\tau$ we can associate a sign $\epsilon(\tau)$ as the figure below.
\begin{center}
\includegraphics{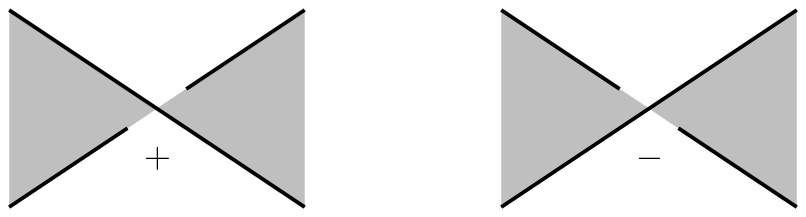}
\centerline{\small Figure 2: The signs of crossings\quad}
\end{center}
Let $\rho$ be a proper coloring of $K$ by $X$, i.e. a homomorphism from the fundamental quandle of $K$ to $X$. In other words, each arc of the diagram is labelled with an element of $X$. For each crossing point $\tau$, assume the over-arc and under-arcs at $\tau$ are colored by $b$ and $a, a\ast b$ respectively, see Figure 1. We consider a weight which is an element of $G$ as
\begin{center}
$W_{\phi}(\tau, \rho)=\phi(a, b)^{\epsilon(\tau)}$,
\end{center}
where $\epsilon(\tau)=\pm1$ according to Figure 2. Then we define the \emph{positive quandle 2-cocycle invariant} of $K$ to be
\begin{center}
$\Phi_{\phi}(K)=\sum\limits_{\rho}\prod\limits_{\tau}W_{\phi}(\tau, \rho)\in \mathbf{Z}G$,
\end{center}
where $\rho$ runs all proper colorings of $K$ by $X$ and $\tau$ runs all crossing points of the diagram. Note that if the sign of the crossing $\epsilon(\tau)$ is replaced by the writhe of $\tau$, one obtains the state-sum $($associated with a quandle 2-cocycle $\phi)$ knot invariants defined by J.S. Carter et al. in \cite{Car2003}.
\begin{theorem}
The positive quandle 2-cocycle invariant $\Phi_{\phi}(K)$ is preserved under Reidemeister moves. If a pair of positive quandle 2-cocycles $\phi_1$ and $\phi_2$ are cohomologous, then $\Phi_{\phi_1}(K)=\Phi_{\phi_2}(K)$. In particular if $\phi$ is a coboundary, we have $\Phi_{\phi}(K)=\sum\limits_{Col_X(K)}1$.
\end{theorem}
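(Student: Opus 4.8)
The plan is to first establish invariance under the three Reidemeister moves from the local pictures, and then to deduce the statements about cohomologous cocycles and coboundaries by a cancellation argument parallel to the one in \cite{Car2003}. For a type I move, the crossing $\tau$ introduced (or removed) has its over-strand and under-strand on a single arc of the diagram, so both carry a common color $a$; since $\phi$ vanishes on degenerate pairs, $W_\phi(\tau,\rho)=\phi(a,a)^{\epsilon(\tau)}$ is the identity of $G$, and the proper colorings match up bijectively before and after. For a type II move the two new arcs are forced by the rest of the coloring (the second quandle axiom), so again the proper colorings correspond bijectively; the two new crossings share the same over-color $b$ and feed the same pair $(a,b)$ into $\phi$, while Figure 2 shows they receive opposite signs $\epsilon$, whence $W_\phi(\tau_1,\rho)\,W_\phi(\tau_2,\rho)=\phi(a,b)^{\epsilon}\phi(a,b)^{-\epsilon}$ is trivial.

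The type III move is the core of the argument. I would fix the colors $a,b,c$ of the three strands on one side of the triangle, propagate them through both diagrams by the rule of Figure 1 (this is the bijection of colorings, and it determines the colors at all crossings in play), and then compare $\prod_\tau W_\phi(\tau,\rho)$ on the two sides. Two of the three crossings change their colors across the move, while the third keeps its pair $(b,c)$ but has its sign $\epsilon$ reversed because the move alters the checkerboard shading inside the triangle; reading all six signs off the convention of Figure 2 and collecting terms, the equality of the two products becomes precisely
\begin{center}
$-\phi(b,c)-\phi(b,c)+\phi(a,c)+\phi(a\ast b,c)-\phi(a,b)-\phi(a\ast c,b\ast c)=0$,
\end{center}
the coefficient $-2$ of $\phi(b,c)$ being exactly this sign reversal. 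This is the positive quandle $2$-cocycle condition, whose shape reflects $\partial^+=d_1+d_2$ rather than the $d_1-d_2$ of \cite{Car2003}. Carrying this sign bookkeeping through the various orientation and over/under configurations of R3 — and confirming that it is the positive relation, not the classical one, that appears each time — is the step I expect to be the main obstacle.

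For the cohomological statements, write $\phi_1-\phi_2=\delta_+\psi$ with $\psi\in C_{Q+}^1(X;G)=\mathrm{Hom}(C_1^{Q+}(X),G)$. It suffices to show that for every proper coloring $\rho$ of $K$ one has $\prod_\tau W_{\delta_+\psi}(\tau,\rho)=$ identity of $G$: then $\prod_\tau W_{\phi_1}(\tau,\rho)=\prod_\tau W_{\phi_2}(\tau,\rho)$ for each $\rho$, and summing over all proper colorings gives $\Phi_{\phi_1}(K)=\Phi_{\phi_2}(K)$; taking $\phi_2$ to be the zero cocycle then gives $\Phi_\phi(K)=\sum_\rho 1=\sum_{\mathrm{Col}_X(K)}1$ whenever $\phi$ is a coboundary. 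Now $\prod_\tau W_{\delta_+\psi}(\tau,\rho)$, written additively, equals $\sum_\tau\epsilon(\tau)\,(\delta_+\psi)(a_\tau,b_\tau)=\psi\!\left(\sum_\tau\epsilon(\tau)\,\partial^+(a_\tau,b_\tau)\right)=\psi(\partial^+ z_\rho)$, where $z_\rho=\sum_\tau\epsilon(\tau)\,(a_\tau,b_\tau)\in C_2^{Q+}(X)$. So the content is the lemma that the colored-diagram $2$-chain $z_\rho$ is a $\partial^+$-cycle (the positive analogue of the fact underlying the classical invariant in \cite{Car2003}): since $\partial^+(a,b)$ is the sum of the two under-arc colors at a crossing minus twice the over-arc color, $\partial^+ z_\rho$ is a signed sum over crossings that cancels by a local analysis of how the checkerboard signs of Figure 2 propagate along the arcs of $K$. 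Granting it, $\psi(\partial^+ z_\rho)=\psi(0)=0$, which completes the proof.
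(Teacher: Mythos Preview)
Your argument is correct and follows essentially the same line as the paper's proof. Two minor differences are worth noting. First, where you anticipate having to check all orientation and over/under configurations of R3, the paper instead invokes Polyak's minimal generating set $\{\Omega_{1a}, \Omega_{1b}, \Omega_{2a}, \Omega_{3a}\}$ \cite{Pol2010}, so only a single R3 variant needs to be verified; this sidesteps what you flag as the main obstacle. Second, for the coboundary statement the paper does not pass through the cycle lemma $\partial^+ z_\rho=0$ but argues directly that each arc's contribution to $\prod_\tau W_\phi(\tau,\rho)$ (with $\phi=\delta_+\varphi$) is trivial: in an alternating diagram every arc is the over-arc at exactly one crossing and an under-arc at two, yielding $\varphi(a)^{-2}\varphi(a)\varphi(a)=1$, and in the non-alternating case the checkerboard signs at the successive over-crossings along an arc alternate, giving the same cancellation. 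This is precisely the ``local analysis of how the checkerboard signs propagate along the arcs'' that your lemma would require, so the two arguments have identical content; yours simply packages it homologically.
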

\begin{proof}
First we prove that $\Phi_{\phi}(K)$ is invariant under Reidemeister moves. In \cite{Pol2010}, M. Polyak proved that all the classical Reidemeister moves can be realized by a generating set of four Reidemeister moves: $\{\Omega_{1a}, \Omega_{1b}, \Omega_{2a}, \Omega_{3a}\}$, see Figure 3. Hence it suffices to show that $\Phi_{\phi}(K)$ is invariant under $\Omega_{1a}, \Omega_{1b}, \Omega_{2a}$ and $\Omega_{3a}$.
\begin{center}
\includegraphics{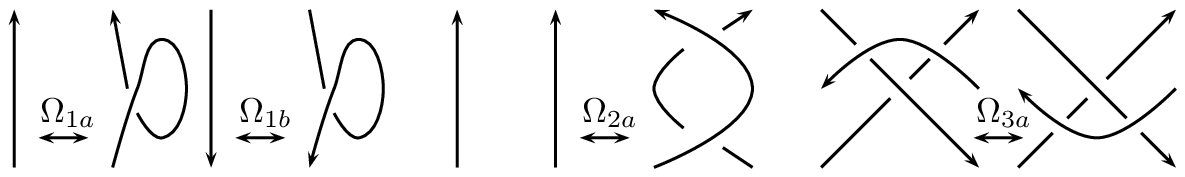}
\centerline{\small Figure 3: Reidemeister moves\quad}
\end{center}
\begin{itemize}
\item $\Omega_{1a}$ and $\Omega_{1b}$: the weight assigned to the crossing point in $\Omega_{1a}$ or $\Omega_{1b}$ is of the form $\phi(a, a)^{\pm1}$, according to the definition of positive quandle cocycle we have $\phi(a, a)^{\pm1}=1$.
\item $\Omega_{2a}$: assume the two arcs on the left side are colored by $a, b$ respectively, then the sum of the weights of the two crossing points on the right side is $\phi(b, a)\phi(b, a)^{-1}=1$.
\item $\Omega_{3a}$: without loss of generality, we assume the top region on both sides are colored white. Under this assumption the signs of each crossings are shown in the figure below.
\begin{center}
\includegraphics{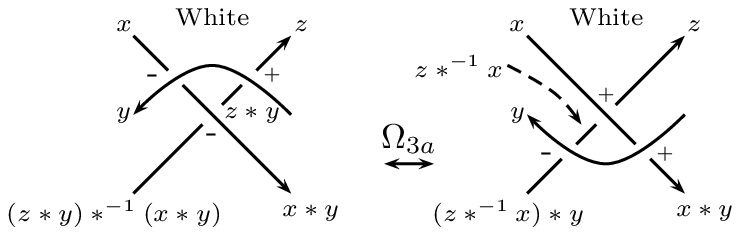}
\centerline{\small Figure 4: Proper colorings under $\Omega_{3a}$\quad}
\end{center}
In order to show that $\Phi_{\phi}(K)$ is invariant under $\Omega_{3a}$, it is sufficient to prove that
\begin{center}
$\phi(x, y)^{-1}\phi(z, y)\phi((z\ast y)\ast^{-1}(x\ast y), x\ast y)^{-1}=\phi(z\ast^{-1}x, y)^{-1}\phi(z\ast^{-1}x, x)\phi(x, y)$.
\end{center}
Note that $(z\ast y)\ast^{-1}(x\ast y)=(z\ast^{-1}x)\ast y$. Put $(a, b, c)=(z\ast^{-1}x, x, y)$ and compare the equation with the positive quandle 2-cocycle condition (note that the equation is written in multiplicative notation here), the result follows.
\end{itemize}

In order to finish the proof it suffices to show that $\Phi_{\phi}(K)=\sum\limits_{Col_X(K)}1$ if $\phi$ is a coboundary. Assume $\phi=\delta_+^1\varphi$ for some $\varphi\in C_{Q+}^1(X; G)$, then
\begin{center}
$\phi(a, b)=\delta_+^1\varphi(a, b)=\varphi(\partial_2^{+}(a, b))=\varphi(-2(b)+(a)+(a\ast b))=\varphi(b)^{-2}\varphi(a)\varphi(a\ast b)\in G$.
\end{center}
First let us consider the simplest case, we assume the knot diagram is alternating, therefore all crossings have the same sign. Without loss of generality all the crossings are assumed to be positive. In this case for a given arc $\lambda$ of the knot diagram, there exists only one crossing such that $\lambda$ is the over-arc at this crossing. On the other hand, this arc is the under-arc at two crossings. For a fixed proper coloring $\rho$, suppose the labelled element of $\lambda$ is $a\in X$, then the contribution of $\lambda$ to $\prod\limits_\tau W_\phi(\tau, \rho)$ comes from the three crossing points that $\lambda$ involved, which equals $\varphi(a)^{-2}\varphi(a)\varphi(a)=1$. It follows that $\prod\limits_\tau W_\phi(\tau, \rho)=1$, hence $\Phi_{\phi}(K)=\sum\limits_{Col_X(K)}1$. The proof of the non-alternating case is analogous to the alternating case. In fact it suffices to notice that if an arc $\lambda$ is the over-arc at several crossings, then the signs of these crossings are alternating. It is not difficult to find that the contribution of $\lambda$ to $\prod\limits_\tau W_\phi(\tau, \rho)$ is still trivial. The proof is finished.
\end{proof}

Recall that in quandle cohomology theory $H_Q^2(R_3)=0$, it means quandle 2-cocycle invariant of $R_3$ can not offer any more information than the Fox 3-colorings. In fact it was pointed out in \cite{Car2003} that all knots have trivial quandle 2-cocycle invariants with any dihedral quandle $R_n$ and any quandle 2-cocycle. We remark that although quandle 2-cocycle invariants of $R_n$ are trivial, some quandle 3-cocycle of $H_Q^3(R_3; \mathbf{Z}_3)$ can be used to distinguish trefoil and its mirror image \cite{Rou2000}.
\begin{proposition}
All knots have trivial positive quandle 2-cocycle invariants with any dihedral quandle $R_n$, associated with any positive quandle 2-cocycle $\phi\in H_{Q+}^2(R_n)$.
\end{proposition}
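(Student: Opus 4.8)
The plan is to reduce the statement to the last assertion of Theorem 5.1 — a coboundary cocycle gives the trivial invariant — together with the fact that $H^2_{Q+}(R_n;\mathbf{Z})$ is a finite group. First I would record the sharper form of Theorem 5.1 that its proof actually establishes: when $\phi$ is a coboundary, $\prod_\tau W_\phi(\tau,\rho)=1$ for \emph{every} proper coloring $\rho$ taken individually (each arc contributes $\varphi(a)^{-2}\varphi(a)\varphi(a)=1$), not merely after summing over $\rho$. I want to apply this sharper form not to $\phi$ itself but to a suitable power of $\phi$.

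The crux is to show that $H^2_{Q+}(R_n;\mathbf{Z})$ is finite. The concrete route is to repeat the computation of Lemma 4.2 for general $n$: writing $\phi=\sum_{i,j}c_{(i,j)}\chi_{(i,j)}$ with $c_{(i,i)}=0$, the positive $2$-cocycle conditions over the triples from $R_n$ form a finite homogeneous linear system in the $c_{(i,j)}$ over $\mathbf{Z}$; solving it and subtracting off the coboundaries $\delta\chi_k$ leaves — as in the $n=3$ case, where what remains is $\langle\chi_{(0,1)}+\chi_{(2,1)}-\chi_{(1,0)}-\chi_{(2,0)}\mid 3(\cdot)=0\rangle\cong\mathbf{Z}_3$ — a finitely presented abelian group with finite cokernel. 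A conceptually lighter variant would work over $\mathbf{Q}$: using $\dim_{\mathbf{Q}}C^{Q+}_i(R_n;\mathbf{Q})=n(n-1)^{i-1}$ and $\dim_{\mathbf{Q}}H^{Q+}_1(R_n;\mathbf{Q})=1$ (Proposition 4.4), a dimension count in the chain complex — matching the torsion-only behaviour found for $T_n$ in Proposition 4.6 — gives $H^{Q+}_2(R_n;\mathbf{Q})=0$, whence by the Universal Coefficient Theorem $H^2_{Q+}(R_n;\mathbf{Z})$ has no free part.

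Granting this, the rest is formal. Let $m$ be the order of $[\phi]$ in the finite group $H^2_{Q+}(R_n;\mathbf{Z})$, so the cocycle $\phi^m$ defined by $\phi^m(c)=\phi(c)^m$ is a coboundary. By the sharp form of Theorem 5.1, $\prod_\tau W_{\phi^m}(\tau,\rho)=1$ for every proper coloring $\rho$; since $W_{\phi^m}(\tau,\rho)=W_\phi(\tau,\rho)^m$, this says $\bigl(\prod_\tau W_\phi(\tau,\rho)\bigr)^m=1$ in $\mathbf{Z}$. As $\mathbf{Z}$ is torsion-free, $\prod_\tau W_\phi(\tau,\rho)=1$ for each $\rho$, hence $\Phi_\phi(K)=\sum_\rho\prod_\tau W_\phi(\tau,\rho)=\sum_\rho 1=\sum_{Col_{R_n}(K)}1$.

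The main obstacle is the middle step — the finiteness (or at least the absence of a free summand) of $H^2_{Q+}(R_n;\mathbf{Z})$; the reduction surrounding it is routine. A purely diagrammatic alternative is possible in principle: over $\mathbf{Z}$ every positive $2$-cocycle of the kei $R_n$ satisfies $\phi(a,b)=\phi(2b-a,b)$ — feed $a\mapsto a\ast b$ into the cocycle condition, use $(a\ast b)\ast b=a$, and observe the resulting combination is $2$-torsion hence zero in $\mathbf{Z}$ — and one could then attempt to telescope $\prod_\tau W_\phi(\tau,\rho)$ over the arcs as in the coboundary case. But organizing that for non-alternating diagrams, where the checkerboard signs $\epsilon(\tau)$ need not all be equal, is fiddly, so the cohomological route above is preferable.
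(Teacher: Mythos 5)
Your outer reduction is sound and is in fact the same mechanism the paper uses: if $m[\phi]=0$ in $H^2_{Q+}(R_n;\mathbf{Z})$ then $m\phi$ is a coboundary, the per-coloring product for a coboundary is trivial (this sharper form of Theorem 5.1 is indeed what its proof establishes), and torsion-freeness of $\mathbf{Z}$ kills the contribution of each coloring separately. The genuine gap is the step you yourself flag as the crux: you never prove that $H^2_{Q+}(R_n;\mathbf{Z})$ has no free summand. Your ``conceptually lighter'' route is not a valid argument: knowing $\dim_{\mathbf{Q}}C^{Q+}_i(R_n;\mathbf{Q})=n(n-1)^{i-1}$ and $H^{Q+}_1(R_n;\mathbf{Q})\cong\mathbf{Q}$ only determines the rank of $\partial_2^+$ (namely $n-1$), hence $\dim\ker\partial_2^+=(n-1)^2$; it says nothing about the rank of $\partial_3^+$, which is exactly what $H_2^{Q+}(R_n;\mathbf{Q})$ depends on. The appeal to Proposition 4.6 has no force here, since the torsion-only behaviour for $T_n$ comes from the special identity $\partial^+_i=2d_1$ on $C^Q_i(T_n)$, which fails for $R_n$. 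Your other route (``repeat Lemma 4.2 for general $n$'') is a restatement of the goal, not a proof, and it is not carried out.

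The paper closes this hole differently, and only where it is true on the nose: for odd $n$ it argues that a free summand in $H^2_{Q+}(R_n)$ would force $H_2^{Q+}(R_n;\mathbf{Z}_2)\neq0$, hence by Proposition 3.3 $H_2^{Q}(R_n;\mathbf{Z}_2)\neq0$, contradicting $H_2^Q(R_n;\mathbf{Z})=0$, $H_1^Q(R_n;\mathbf{Z})=\mathbf{Z}$ and the universal coefficient theorem; note these input facts are specific to odd (connected) $R_n$. For even $n$ the paper does not claim any cohomological finiteness at all: it uses that $K$ is a knot, so all arcs of a proper $R_n$-coloring share one parity, and transfers $\phi$ to a positive cocycle on $R_{n/2}$, inducting down to the odd case. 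Your proposal treats all $n$ uniformly and therefore implicitly asserts finiteness of $H^2_{Q+}(R_n;\mathbf{Z})$ for even $n$ as well, which is neither proved by you nor asserted in the paper, and for which the $\mathbf{Z}_2$-transfer argument gives no help (the needed vanishing of $H_2^Q(R_n;\mathbf{Z}_2)$ fails for disconnected $R_n$). To repair the proof you should either establish the no-free-part claim for odd $n$ by the coefficient argument above and add the parity reduction for even $n$, or else actually carry out a computation of $H^2_{Q+}(R_n)$ for all $n$ — the analogy with the $n=3$ calculation is not a substitute.
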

\begin{proof}
If $n$ is even, according to the coloring rule at each crossing point, for each colored knot diagram all the assigned elements have the same parity. If all assigned elements are even, then by replacing the assigned element $i$ with $\frac{i}{2}$ we obtain a proper coloring with $R_{\frac{n}{2}}$. Consider the element $\phi'$ of $H_{Q+}^2(R_{\frac{n}{2}})$ defined by $\phi'(i, j)=\phi(2i, 2j)$, then $\Phi_{\phi'}(K)$ with $R_{\frac{n}{2}}$ is nontrivial if $\Phi_{\phi}(K)$ with $R_n$ is nontrivial. If all assigned elements are odd, then one obtains a proper coloring with $R_{\frac{n}{2}}$ by replacing each labelled element $i$ with $\frac{i-1}{2}$. Similarly if $\Phi_{\phi}(K)$ with $R_n$ is nontrivial then $\Phi_{\phi''}(K)$ with $R_{\frac{n}{2}}$ is also nontrivial, where $\phi''(i, j)=\phi(2i+1, 2j+1)$. Therefore it is sufficient to consider the case of odd $n$.

If $n$ is odd, it suffices to prove that the free part of $H_{Q+}^2(R_n)=0$. This follows from a general fact: $\Phi_{\phi}(K)$ is trivial if $\phi$ has finite order in $H_{Q+}^2(X)$. In fact assume $k\phi=0\in H_{Q+}^2(X)$, then $\prod\limits_\tau W_{k\phi}(\tau, \rho)=0$. In other words, $\prod\limits_\tau k\phi(a, b)^{\epsilon(\tau)}=k(\prod\limits_\tau\phi(a, b)^{\epsilon(\tau)})=0$. Since we are working with the coefficient $\mathbf{Z}$, it follows that $\prod\limits_\tau\phi(a, b)^{\epsilon(\tau)}=0$.

Assume the free part of $H_{Q+}^2(R_n)\neq0$, it follows that the free part of $H^{Q+}_2(R_n)\neq0$. Replacing the coefficient $\mathbf{Z}$ by $\mathbf{Z}_2$ one concludes that $H^{Q+}_2(R_n; \mathbf{Z}_2)$ contains $\mathbf{Z}_2$ as a summand. By Proposition 3.3 we have $H_2^Q(R_n; \mathbf{Z}_2)=\mathbf{Z}_2\bigoplus$ else. However since $H_2^Q(R_n; \mathbf{Z})=0$ \cite{Car2001J} and $H_1^Q(R_n; \mathbf{Z})=\mathbf{Z}$, the universal coefficient theorem tells us that $H_2^Q(R_n; \mathbf{Z}_2)=0$. The proof is finished.
\end{proof}

Now we give a non-trivial example of positive quandle 2-cocycle invariant. With the matrix of a finite quandle introduced in \cite{Ho2005}, quandle $S_4$ contains four elements $\{0, 1, 2, 3\}$ with quandle operations
\begin{center}
$\begin{bmatrix}
  0 & 2 & 3 & 1 \\
  3 & 1 & 0 & 2 \\
  1 & 3 & 2 & 0 \\
  2 & 0 & 1 & 3 \\
\end{bmatrix}$,
\end{center}
where the entry in row $i$ column $j$ denotes $(i-1)\ast(j-1)$ $(1\leq i, j\leq4)$. Choose a positive quandle 2-cocycle
\begin{center}
$\phi=\chi_{(0, 1)}+\chi_{(1, 0)}+\chi_{(2, 0)}+\chi_{(0, 2)}+\chi_{(1, 2)}+\chi_{(2, 1)}\in H_{Q+}^2(S_4; \mathbf{Z}_2)$,
\end{center}
it was proved in \cite{Car2003} that $\Phi_{\phi}(3_1)=\Phi_{\phi}(4_1)=\sum\limits_40+\sum\limits_{12}1$.

We end this subsection by some remarks on the positive quandle 2-cocycle invariants with trivial quandles. First note that for $T_n$ and for any knot diagram there exist exactly $n$ trivial proper colorings. By the definition of $\pm$ quandle homology groups we can not obtain any new information from the $\pm$ quandle cocycle invariants. However it was pointed out in \cite{Car2003} that for any $\phi\in H_Q^2(T_n)$ and any link $L$, the quandle 2-cocycle invariant $\Phi_\phi(L)$ is a function of pairwise linking numbers. For example $\phi=\chi_{(a_1, a_2)}\in H_Q^2(T_2)$ can be used to distinguish the Hopf link from the trivial link. Since $H_{Q+}^2(T_2)\cong \mathbf{Z}_2$ with generator $\phi=\chi_{(a_1, a_2)}-\chi_{(a_2, a_1)}$, one obtains $\Phi_\phi(L)$ is trivial for any link $L$. In order to obtain some information from the link, we can work with coefficient $\mathbf{Z}_2$. In this way we can obtain the parity information of the pairwise linking numbers. For example, a link $L=K_1\cup \cdots \cup K_m$ is a proper link, i.e. $\sum\limits_{j\neq i}lk(K_i, K_j)=0$ $($mod 2$)$ for any $1\leq i\leq m$, if and only if $\sum\limits_{\rho_{1, m-1}}\prod\limits_{\tau}W_{\phi=\chi_{(a_1, a_2)}}(\tau, \rho_{1, m-1})=\sum\limits_m0$. Here $\mathbf{Z}_2=\{0, 1\}$ and $\rho_{1, m-1}$ denotes the set of proper colorings which assign one component with $a_1$ and the else with $a_2$. This result mainly follows from the fact that $H_{Q+}^2(X; \mathbf{Z}_2)\cong H_Q^2(X; \mathbf{Z}_2)$. From this viewpoint, for $T_n$, it seems that the positive quandle 2-cocycle invariant is a sort of $\mathbf{Z}_2$-version of the quandle 2-cocycle invariant. Later in the final section we will show that this is not the case.

\subsection{Positive quandle cocycle invariants for knotted surfaces}
In this subsection, with a given positive quandle 3-cocycle we will define a state-sum invariant for knotted surfaces in $R^4$. First we will take a short review of the background of knotted surfaces in $R^4$. The readers are referred to \cite{Car1998} and \cite{Car2004} for more details.

By a \emph{knotted surface} we mean an embedding $f$ of a closed oriented surface $F$ into $R^4$. Sometimes we also call the image $f(F)$ a knotted surface and denote it by $F$ for convenience. In particular when $F=S^2$ we name it a \emph{2-knot}. Two knotted surfaces are \emph{equivalent} if there exists an orientation preserving automorphism of $R^4$ which takes one knotted surface to the other. Similar to the knot diagram in knot theory, we usually study knotted surfaces via the knotted surface diagrams. Let $p: R^4\rightarrow R^3$ be the orthogonal projection from $R^4$ onto $R^3$, we may deform $f(F)$ slightly such that $p\circ f(F)$ is in a general position, then $p\circ f(F)$ is called a \emph{knotted surface diagram}. We must notice that a knotted surface diagram does not just mean an immersed surface in $R^3$. First there exist double points, triple points and branch points in $p\circ f(F)$. However it is well-known that $f(F)$ can be isotoped into a new position such that the projection contains no branch points \cite{Car1992,Gil1982}. Second, a knot diagram can be regarded as a 4-valent planar graph with some over-under information on each vertex. Hence a knotted surface diagram also contains the information of the over-sheet and under-sheet along the double curves. In other words, a knotted surface diagram is obtained from the projection by removing small open neighborhoods of the under-sheets along double curves.

Similar to the definition of the knot invariant Col$_X(K)$, we can define an integer-valued knotted surface invariant with a given quandle $X$. The main idea is using the elements of $X$ to color the regions of the broken surface diagram according to some rules at double curves. See the figure below, here $\overrightarrow{n}$ denotes the normal vector of the knotted surface diagram.
\begin{center}
\includegraphics{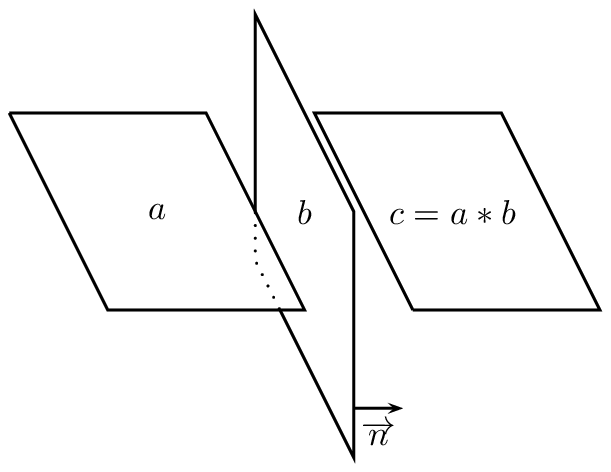}
\centerline{\small Figure 5: Coloring rules at a double curve\quad}
\end{center}

It is not difficult to check that the rule above is well-defined at each triple point \cite{Car2003}. Recall that different knotted surface diagrams represent the same knotted surface if and only if one of them can be achieved from the other by a finite sequence of Roseman moves \cite{Ros1998}. Similar to the proper coloring of knot diagrams, the number of the coloring satisfying the condition above is invariant under the Roseman moves, hence is a knotted surface invariant. We use Col$_X(F)$ to denote it.

The main idea of defining a knotted surface invariant with a positive quandle 3-cocycle is analogous to the definition of the quandle 3-cocycle invariant proposed in \cite{Car2003}. As a generalization of the counting invariant Col$_X(F)$, we need to assign an invariant for each colored knotted surface diagram and then take the sum of them. The position of triple point in knotted surface diagram is analogous to that of crossing point in knot diagram. Therefore this invariant can be obtained by assigning a weight to each triple point of the colored diagram.

Let $F$ be a knotted surface diagram and $X$ a finite quandle. Assume $G$ is an abelian group and $\theta\in H_{Q+}^3(X; G)$ is a positive quandle 3-cocycle. Consider the shadow of the diagram $F$, which is the immersed surface in $R^3$ without removing neighborhood along double curves. The shadow separates $R^3$ into several regions. It is not difficult to observe that we can use white and black to color these regions in 3-dimensional checkerboard fashion, i.e. adjacent regions are colored with different colors. We remark that the assumption that the surface is orientable is essentially used here. As before we assume that the unique unbounded region is colored white. For each triple point $\tau$ we can associate a sign $\epsilon(\tau)$ according to the figure below $($W=white, B=Black$)$.
\begin{center}
\includegraphics{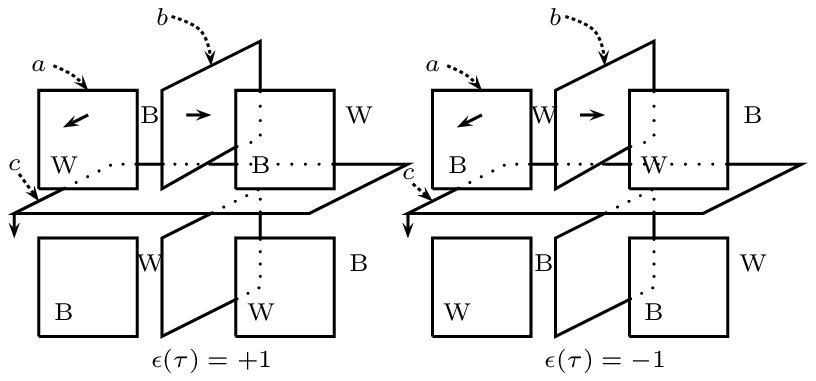}
\centerline{\small Figure 6: Signs of triple points\quad}
\end{center}

Let $\rho$ denote a coloring of $F$ by $X$. Assume $\tau$ is a triple point of $F$, the bottom, middle, top sheets around the octant from which all normal vectors point outwards are colored by $a, b, c$ respectively, see the figure above. Note that the sign of the triple point used here does not depend on the orientation of the surface. We associate a weight at the triple point $\tau$ as
\begin{center}
$W_{\theta}(\tau, \rho)=\theta(a, b, c)^{\epsilon(\tau)}\in G$.
\end{center}
Now we can define the \emph{positive quandle 3-cocycle invariant} of knotted surface $F$ associated with $\theta$ to be
\begin{center}
$\Theta_{\theta}(F)=\sum\limits_{\rho}\prod\limits_{\tau}W_{\theta}(\tau, \rho)\in \mathbf{Z}G$,
\end{center}
where $\rho$ runs all colorings of $F$ by $X$ and $\tau$ runs all triple points of the diagram.

We remark that the sign of a triple point has another definition. Consider the normal vectors of the top, middle and bottom sheets, if the orientation in this order matches the orientation of $R^3$, we say this triple point is positive. Otherwise it is negative. Replace $\epsilon(\tau)$ with the sign of triple point defined in this way one obtains the state-sum invariants introduced in \cite{Car2003}.
\begin{theorem}
The positive quandle 3-cocycle invariant $\Theta_{\theta}(F)$ is preserved under Roseman moves. If a pair of positive quandle 3-cocycles $\theta_1$ and $\theta_2$ are cohomologous, then $\Theta_{\theta_1}(F)=\Theta_{\theta_2}(F)$. In particular if $\theta$ is a coboundary, we have $\Theta_{\theta}(F)=\sum\limits_{Col_X(F)}1$.
\end{theorem}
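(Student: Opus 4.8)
The plan is to mirror the proof of Theorem 5.1, replacing crossings by triple points and the positive quandle 2-cocycle condition by the positive quandle 3-cocycle condition, but now checking invariance under the Roseman moves rather than the Reidemeister moves. First I would recall that two knotted surface diagrams represent the same knotted surface if and only if they are related by a finite sequence of Roseman moves \cite{Ros1998}, and that among these moves only those that alter the set of triple points (together with their colors and signs) can possibly change the state-sum $\Theta_{\theta}(F)$. Concretely: the moves that create or destroy a branch point, or that pass a sheet over a branch point, involve no triple point, so $\prod_{\tau}W_{\theta}(\tau,\rho)$ is unaffected; the moves that slide a sheet across a double curve with no triple point present likewise do not change the triple-point data. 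The move that introduces or removes a pair of triple points of opposite sign (the analogue of $\Omega_{2a}$) contributes a factor $\theta(a,b,c)^{\epsilon}\,\theta(a,b,c)^{-\epsilon}=1$ for the matching colors $a,b,c$ on the two new triple points, hence leaves $\Theta_{\theta}(F)$ unchanged. The branch-point moves that touch a single triple point are handled by the cocycle being a quandle cochain (vanishing on degenerate tuples $(a,a,c)$ and $(a,b,b)$), exactly as $\Omega_{1a},\Omega_{1b}$ were handled in Theorem 5.1 via $\phi(a,a)=1$.

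The essential case is the tetrahedral Roseman move, which rearranges four sheets and replaces one configuration of four triple points by another configuration of four triple points. After fixing the checkerboard coloring so that the topmost region is white (as we did for $\Omega_{3a}$), the signs $\epsilon(\tau)$ of the eight triple points involved are determined, and for a fixed coloring $\rho$ the ratio of $\prod_{\tau}W_{\theta}(\tau,\rho)$ before and after the move becomes a product of eight terms $\theta(\cdot,\cdot,\cdot)^{\pm1}$. Reading off the colors of the bottom/middle/top sheets at each of the eight triple points — using the coloring rule of Figure 5 and keeping careful track of which octant has all normal vectors pointing outward — I would check that this product equality is precisely the positive quandle 3-cocycle condition
\begin{center}
$-\theta(b,c,d)-\theta(b,c,d)+\theta(a,c,d)+\theta(a\ast b,c,d)-\theta(a,b,d)-\theta(a\ast c,b\ast c,d)+\theta(a,b,c)+\theta(a\ast d,b\ast d,c\ast d)=0$
\end{center}
written multiplicatively, for the appropriate substitution of the four sheet-colors $a,b,c,d$. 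This is the same bookkeeping that identifies the tetrahedral move with the $3$-cocycle condition in \cite{Car2003}; the only new point is that with the checkerboard sign $\epsilon(\tau)$ in place of the writhe-type sign, one lands on the \emph{positive} $3$-cocycle identity rather than the ordinary one.

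For the second assertion, if $\theta_1$ and $\theta_2$ are cohomologous, write $\theta_2=\theta_1+\delta_{+}^{2}\psi$ for some $\psi\in C_{Q+}^2(X;G)$; I would show that for every coloring $\rho$ the extra factor $\prod_{\tau}(\delta_{+}^{2}\psi)(a_\tau,b_\tau,c_\tau)^{\epsilon(\tau)}$ telescopes to $1$. This is the $3$-dimensional analogue of the arc-by-arc cancellation in Theorem 5.1: each region of the shadow of $F$ is the under-region along some double curves and the over-region along others, and summing the contributions of $\delta_{+}^{2}\psi$ around the double curves bounding a fixed region — with signs dictated by the checkerboard coloring being alternating across double curves — gives a total contribution of $0$ in $G$ (equivalently the product is $1$ in $\mathbf{Z}G$). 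Taking $\theta_1=0$ gives the coboundary case $\Theta_{\theta}(F)=\sum_{Col_X(F)}1$. The main obstacle is the tetrahedral-move computation: correctly orienting the four sheets, identifying the distinguished octant at each of the eight triple points, and matching the resulting eight-term product with the stated positive $3$-cocycle condition (including the signs coming from the checkerboard coloring) is delicate, though it is entirely parallel to the $\Omega_{3a}$ verification already carried out and to the triple-point analysis in \cite{Car2003}.
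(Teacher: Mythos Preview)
Your treatment of invariance under Roseman moves follows the paper's argument: dispose of the moves that create/cancel a pair of triple points of opposite sign and the branch-point move (handled by the degeneracy condition), then verify that the tetrahedral move matches the positive $3$-cocycle condition. The paper carries this out via movie slices, reading off that one side contributes $\theta(a,b,c)\theta(a\ast c,b\ast c,d)^{-1}\theta(a,c,d)\theta(b,c,d)^{-1}$ and the other $\theta(b,c,d)\theta(a\ast b,c,d)^{-1}\theta(a,b,d)\theta(a\ast d,b\ast d,c\ast d)^{-1}$; equating these is exactly the positive $3$-cocycle identity written multiplicatively. So on this half you and the paper agree.

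Your coboundary argument, however, has a genuine gap. The phrase ``summing the contributions of $\delta_{+}^{2}\psi$ around the double curves bounding a fixed region'' does not describe a well-defined cancellation: regions of $R^3$ minus the shadow are bounded by pieces of sheets, not by double curves, and the six factors of
\[
\delta_{+}^{2}\psi(a,b,c)=\psi(b,c)^{-2}\psi(a,c)\psi(a\ast b,c)\psi(a,b)^{-1}\psi(a\ast c,b\ast c)^{-1}
\]
are indexed by pairs of sheets (i.e.\ double-curve data), not by single regions or sheets. The direct ``region-by-region'' analogue of the arc-by-arc argument in Theorem~5.1 does not go through one dimension up.

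The paper's mechanism is different and concrete. Choose a diagram with no branch points; then the double-point set is a $6$-valent graph with vertices the triple points and edges the double-curve arcs. At a triple point $\tau$ the six factors above are assigned to the six incident edges $tm_1,tm_2,mb_1,mb_2,bt_1,bt_2$ (two edges for each of the pairs top/middle, middle/bottom, bottom/top). One then checks, by a short case analysis on whether the two endpoints of a given edge have the same or opposite checkerboard sign $\epsilon$, that every edge receives mutually inverse contributions from its two endpoints, so the product over all triple points collapses to $1$. This edge-by-edge cancellation along the double-curve graph is the correct surface analogue of the arc-by-arc cancellation in Theorem~5.1, and it is the ingredient your sketch is missing.
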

\begin{proof}
We summarize the proof. There are only three types of Roseman move that involve triple points, see \cite{Car2003}. The first one creates or cancels a pair of triple points with oppositive signs, the second one moves a branch point through a sheet. The contribution of the two triple points in the first case will cancel out, and the contribution of the triple point in the second case is trivial according to the definition of positive quandle cohomology groups. Thus it suffices to prove that $\Theta_{\theta}(F)$ is invariant under tetrahedral move. See the figures below.
\begin{center}
\includegraphics{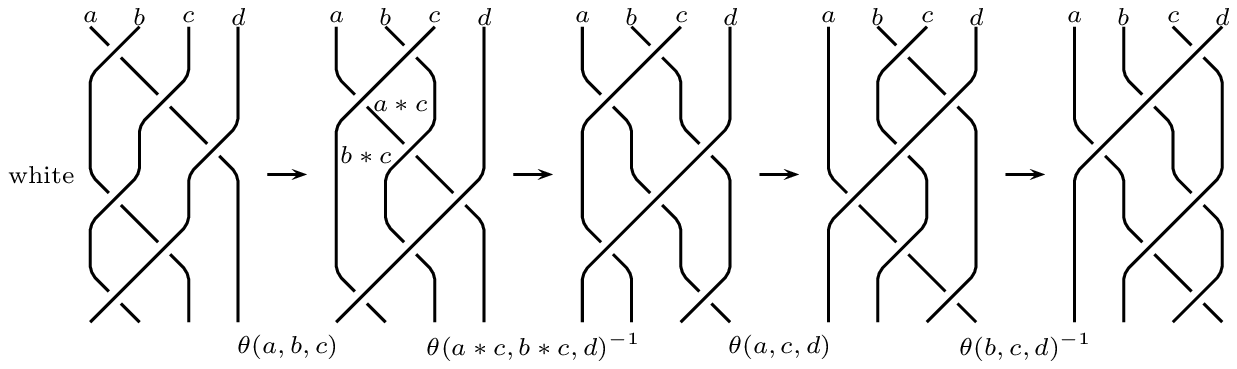}
\centerline{\small Figure 7: Left hand side of tetrahedral move\quad}
\end{center}
\begin{center}
\includegraphics{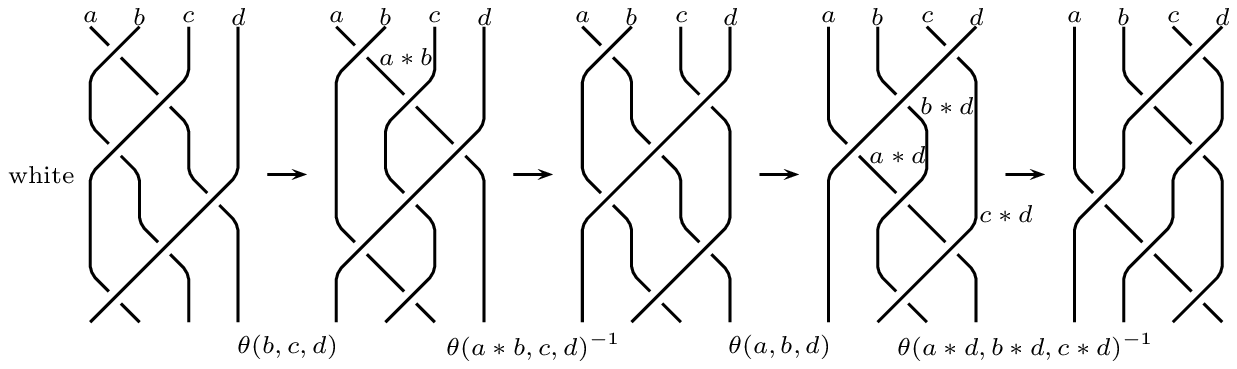}
\centerline{\small Figure 8: Right hand side of tetrahedral move\quad}
\end{center}

Here we use the movie description of knotted surface, see \cite{Car1998} for more detail. For example figure 7 contains five slices of a knotted surface according to a fixed height function, each slice consists of four sheets which are cross sections of four planes, and a pair of adjacent slices depict a triple point. Figure 7 and figure 8 correspond to the left hand side and the right hand side of tetrahedral move. Without loss of generality, suppose the leftmost region of each slice has the white color, and other regions can be colored in checkerboard fashion. The left hand side of tetrahedral move contributes $\theta(a, b, c)\theta(a\ast c, b\ast c, d)^{-1}\theta(a, c, d)\theta(b, c, d)^{-1}$ to $\Theta_{\theta}(F)$, and the right side has the contribution $\theta(b, c, d)\theta(a\ast b, c, d)^{-1}\theta(a, b, d)\theta(a\ast d, b\ast d, c\ast d)^{-1}$. In order to prove that $\Theta_{\theta}(F)$ is invariant under tetrahedral move, it suffices to show that
\begin{center}
$\theta(a, b, c)\theta(a\ast c, b\ast c, d)^{-1}\theta(a, c, d)\theta(b, c, d)^{-1}\theta(b, c, d)^{-1}\theta(a\ast b, c, d)\theta(a, b, d)^{-1}\theta(a\ast d, b\ast d, c\ast d)=1$
\end{center}
Comparing the equation above with the positive quandle 3-cocycle condition (note that the equation is written in multiplicative notation at present), we find that the condition $\theta\in H_{Q+}^3(X; G)$ guarantees the invariance of $\Theta_{\theta}(F)$. Here we only list one case of tetrahedral move, for other possible tetrahedral moves the invariance of $\Theta_{\theta}(F)$ can be proved in the same way.

Next we show that $\Theta_{\theta}(F)=\sum\limits_{Col_X(F)}1$ if $\theta$ is a coboundary. As we mentioned before, we can choose a knotted surface diagram such that the shadow of it contains no branch points. The double point set of it is a 6-valent graph and each vertex corresponds to a triple point. Fix a coloring $\rho$. According to the assumption that $\theta$ is a coboundary, i.e. $\theta=\delta_+^2\phi$ for some $\phi\in H_{Q+}^2(X; G)$, we have
\begin{center}
$\theta(a, b, c)=\delta_+^2\phi(a, b, c)=\phi(\partial_3^+(a, b, c))=\phi(b, c)^{-2}\phi(a, c)\phi(a\ast b, c)\phi(a, b)^{-1}\phi(a\ast c, b\ast c)^{-1}\in G$
\end{center}

Consider the triple point $\tau$ on the left side of figure 6, which has a weight $W_\theta(\tau, \rho)=\theta(a, b, c)=\phi(b, c)^{-2}\phi(a, c)\phi(a\ast b, c)\phi(a, b)^{-1}\phi(a\ast c, b\ast c)^{-1}$. There are six edges adjacent to the triple point $\tau$, two of them come from the intersection of the top sheet and the middle sheet, two of them come from the intersection of the middle sheet and the bottom sheet and the rest come from the intersection of the bottom sheet and the top sheet. We use $tm_1(\tau), tm_2(\tau), mb_1(\tau), mb_2(\tau), bt_1(\tau), bt_2(\tau)$ to denote these edges, where $tm_i(\tau)$ $(i=1, 2)$ denote the two edges belonging to the intersection of the top sheet and the middle sheet, $mb_i(\tau)$ $(i=1, 2)$ denote the two edges belonging to the intersection of the middle sheet and the bottom sheet and $bt_i(\tau)$ $(i=1, 2)$ denote the two edges belonging to the intersection of the bottom sheet and the top sheet. The order of the two edges belonging to the intersection of two sheets matches the orientation of the normal vector of the third sheet. Then the contribution of $\tau$ to $\Theta_{\theta}(F)$ can be separated into six parts: $\phi(b, c)^{-1}, \phi(b, c)^{-1}, \phi(a, b)^{-1}, \phi(a\ast c, b\ast c)^{-1}, \phi(a, c), \phi(a\ast b, c)$. We assign these six parts to $tm_1(\tau), tm_2(\tau), mb_1(\tau), mb_2(\tau), bt_1(\tau), bt_2(\tau)$ respectively. Therefore the contribution of $\tau$ can be regarded as the product of the contribution of the six edges adjacent to $\tau$. We remark that the contribution of each edge can be read directly from figure 5, the double line in figure 5 has contribution $\phi(a, b)^{\pm1}$. Here the sign of $\pm1$ is decided by the position of the two sheets. The sign is positive if the two sheets are the top sheet and the bottom sheet, for other cases the sign is negative. If sign of the triple point is negative then all the contribution will take the inverse.

In order to show that $\Theta_{\theta}(F)$ is trivial, it is sufficient to prove that each edge obtains opposite contributions from the two endpoints of it. We continue our discussion in two cases: two endpoints has the same sign or different signs.
\begin{center}
\includegraphics{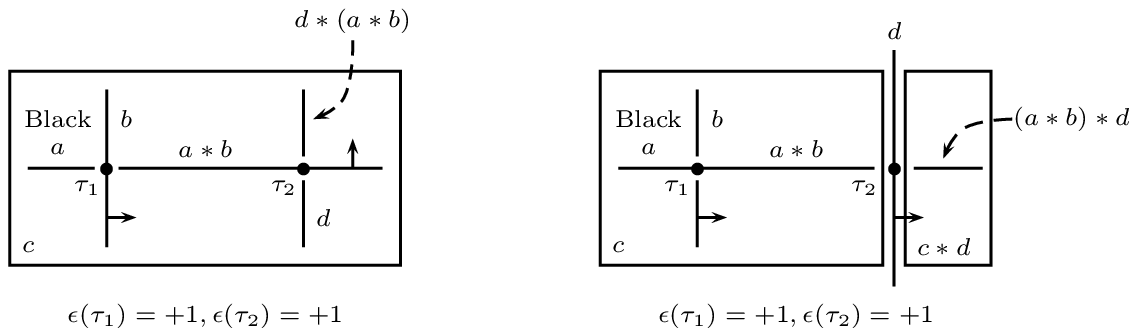}
\centerline{\small Figure 9: Two possibilities of adjacent triple points with the same sign\quad}
\end{center}
\begin{itemize}
  \item $\epsilon(\tau_1)=+1$ and $\epsilon(\tau_2)=+1$, there are two possibilities in this case. First consider the left side of figure 9. There are two triple points $\tau_1$ and $\tau_2$ with the same sign. Without loss of generality we assume the sign is positive. The frame with color $c$ denotes the top sheet of $\tau_1$ and $\tau_2$, and the straight lines are cross sections between the middle sheet or bottom sheet with the top sheet. Since $W_{\theta}(\tau_1, \rho)=\theta(a, b, c)$ and $W_{\theta}(\tau_2, \rho)=\theta(d, a\ast b, c)$, the contribution from $\tau_1$ to the edge with color $a\ast b$ is $\phi(a\ast b, c)$ and that from $\tau_2$ is $\phi(a\ast b, c)^{-1}$. The negative sign comes from the fact that for triple point $\tau_2$, the edge with color $a\ast b$ belongs to the intersection of the top sheet and the middle sheet. Hence the contributions from $\tau_1$ and $\tau_2$ to the edge between them cancel out. Consider the 6-valent graph consists of the double point set, it follows that the product of the contribution from each vertex to $\Theta_{\theta}(F)$ vanishes.

      For the right side of figure 9, we still have $\epsilon(\tau_1)=+1$ and $\epsilon(\tau_2)=+1$. Note that in this case the sheet with color $d$ is the top sheet of the triple point $\tau_2$. We have $W_{\theta}(\tau_1, \rho)=\theta(a, b, c)$ and $W_{\theta}(\tau_2, \rho)=\theta(a\ast b, c, d)$. Therefore the contribution from $\tau_1$ to the edge with color $a\ast b$ is $\phi(a\ast b, c)$ and the contribution from $\tau_2$ to the edge with color $a\ast b$ is $\phi(a\ast b, c)^{-1}$, since the edge with color $a\ast b$ belongs to the intersection of the middle sheet and the bottom sheet of $\tau_2$. Therefore the contributions from $\tau_1$ and $\tau_2$ to the edge between them still cancel out.
\end{itemize}
\begin{center}
\includegraphics{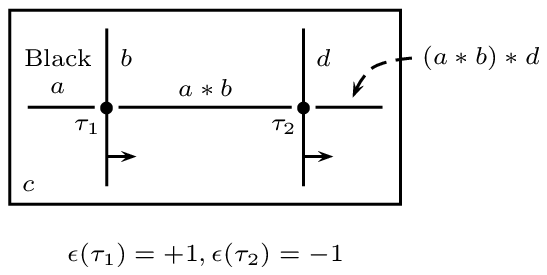}
\centerline{\small Figure 10: Adjacent triple points with different signs\quad}
\end{center}
\begin{itemize}
  \item $\epsilon(\tau_1)=+1$ and $\epsilon(\tau_2)=-1$, see figure 10. We can read from the figure that $W_{\theta}(\tau_1, \rho)=\theta(a, b, c)$ and $W_{\theta}(\tau_2, \rho)=\theta(a\ast b, d, c)^{-1}$. As before the contribution from $\tau_1$ to the edge with color $a\ast b$ is $\phi(a\ast b, c)$. Meanwhile, due to $\epsilon(\tau_2)=-1$, the contribution from $\tau_2$ to the edge with color $a\ast b$ equals $\phi(a\ast b, c)^{-1}$. Hence in this case we still have $\prod\limits_{\tau}W_{\theta}(\tau, \rho)=1$. The proof is finished.
\end{itemize}
\end{proof}

\textbf{Remark} In quandle cohomology theory, quandle 3-cocycle $\theta$ also can be used to define a state-sum invariant for knots via the shadow coloring. Given a knot diagram $K$ and a quandle $X$, a \emph{shadow coloring} of $K$ by $X$ is a function from the set of arcs of $K$ and the regions separated by the shadow of $K$ to the quandle $X$, satisfying the coloring condition depicted below.
\begin{center}
\includegraphics{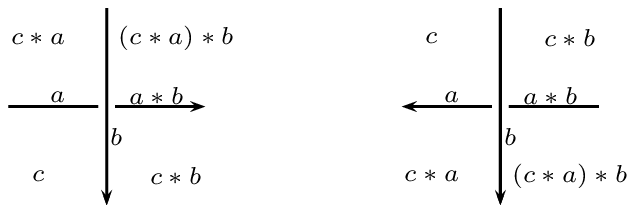}
\centerline{\small Figure 11: Shadow coloring at a crossing\quad}
\end{center}
It is not difficult to observe that shadow colorings are completely decided by the proper colorings on arcs and the color of one fixed region. Hence the number of shadow colorings do not offer any new information rather than Col$_X(K)$. Given a quandle 3-cocycle $\theta\in H_Q^3(X; G)$ one can associate a weight $W_{\theta}(\tau, \widetilde{\rho})=\theta(c, a, b)^{w(\tau)}$ with the crossing point in figure 11, here $w(\tau)$ means the writhe of the crossing and $\widetilde{\rho}$ denotes a shadow coloring. Then the element of $\mathbf{Z}G$ : $\Psi_{\theta}(K)=\sum\limits_{\widetilde{\rho}}\prod\limits_{\tau}W_{\theta}(\tau, \widetilde{\rho})$ defines a knot invariant, where $\widetilde{\rho}$ runs all shadow colorings and $\tau$ runs all crossing points. It was pointed out in \cite{Rou2000} that this state-sum invariant can be used to detect the chirality of the trefoil knot. An interesting question is how to define a knot invariant with a given positive quandle 3-cocycle.

\section{On trivially colored crossing points}
We end this paper with two elementary examples which concerns trivially colored crossing points. Given a knot diagram $K$ and a quandle $X$, choose a crossing point $\tau$ of the knot diagram. We say $\tau$ is a \emph{trivially colored crossing point} if for any proper coloring of $K$ by $X$, the over-arc and the two under-arcs of $\tau$ are labelled with the same color. For example the crossing point involved in the first Reidemeister move is a trivially colored crossing point for any given quandle. As another instance, consider the crossing $\tau$ of the knot diagram below. If we take $X=R_3$, then the crossing $\tau$ is a trivially colored crossing point.
\begin{center}
\includegraphics{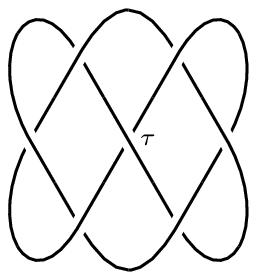}
\centerline{\small Figure 12: A trivially crossing point\quad}
\end{center}

There are two reasons for us to study trivially colored crossing points. The first motivation comes from the Kauffman-Harary conjecture. L. Kauffman and F. Harary \cite{Har1999} conjectured that the minimum number of distinct colors that are needed to produce a non-trivial Fox $n$-coloring of a reduced alternating knot diagram $K$ with prime determinate $n$ equals the crossing number of $K$. In other words for any non-trivial Fox $n$-coloring of $K$, different arcs are assigned by different colors. In 2009 this conjecture was settled by T.W. Mattman and P. Solis in \cite{Mat2009}. It means that for a given reduced alternating diagram with prime determinate $n$ and the quandle $R_n$, no crossing point of the knot diagram is trivially colored. However this conjecture does not hold if we ignore the condition of prime determinate. For example consider the standard diagram of the connected sum of two reduced alternating knot diagrams which have prime determinate $m$ and $n$ respectively. Choose the quandle $R_{mn}$. Now there exists no Fox $mn$-coloring such that different arcs has different colors, but for each crossing point there exists a proper coloring such that this cross point is nontrivially colored. It is possible to extend the range of knots in Kauffman-Harary conjecture by replacing the heterogeneity of the coloring with the nonexistence of trivially colored crossing points.

The second motivation of investigating trivially colored crossing points arises from the $\pm$ quandle 2-cocycle invariants. Recall the definition of $\pm$ quandle cohomology groups, in order for the 2-cocycle invariant to be preserved under the first Reidemeister move we put $\phi(a, a)=1$. In this way the first Reidemeister move has no effect on the 2-cocycle invariant, but the disadvantage is the information of trivially colored crossing points are also lost. For instance if a crossing point $\tau$ of a knot diagram $K$ is a trivially colored crossing point $($associated with $X)$, then $W_{\phi}(\tau, \rho)=1$ for any 2-cocycle $\phi$ and proper coloring $\rho$. Hence it has no contribution to the cocycle invariant.

The first example we want to discuss is the Borromean link. The Borromean link is a nontrivial 3-component link with trivial proper sublinks. The Borromean link is nontrivial follows from the fact that one component of the Borromean represents a commutator of the fundamental group of the complement of the other two components \cite{Rol1976}. Let $X=T_n$, as we mentioned before, the quandle 2-cocycle of a link is a function of pairwise linking numbers \cite{Car2003}. Since the pairwise linking numbers of the Borromean link are all trivial, it follows that the quandle 2-cocycle invariant can not distinguish the Borromean link from the trivial link. However we can use a refinement of the positive quandle 2-cocycle invariant to show that the Borromean link is nontrivial.
\begin{center}
\includegraphics{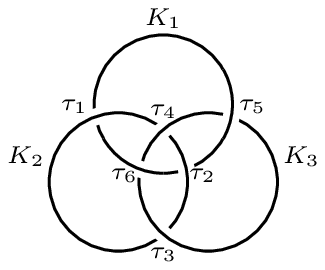}
\centerline{\small Figure 13: The Borromean link\quad}
\end{center}

Let $K_1, K_2, K_3$ denote the three components of the Borromean link and $\tau_i$ $(1\leq i\leq6)$ denote the crossing points of it. See the figure above. According to the definition of $\epsilon(\tau_i)$ we used in Section 5, we have $\epsilon(\tau_i)=+1$ $(1\leq i\leq6)$. Take $\phi=\chi_{(a_1, a_2)}+\chi_{(a_2, a_1)}\in H_{Q+}^2(T_2; \mathbf{Z}_4)$, consider the element
\begin{center}
$\widetilde{\Phi}_{\phi}(BL)=\sum\limits_{\rho}(t_1^{W_{\phi}(\tau_1, \rho)+W_{\phi}(\tau_2, \rho)}t_2^{W_{\phi}(\tau_3, \rho)+W_{\phi}(\tau_4, \rho)}t_3^{W_{\phi}(\tau_5, \rho)+W_{\phi}(\tau_6, \rho)})\in \mathbf{Z}[t_1, t_2, t_3]/(t_1^4=t_2^4=t_3^4=1)$,
\end{center}
where $W_{\phi}(\tau_i, \rho)$ is the weight associated to the crossing $\tau_i$ and $\rho$ runs all proper colorings of the diagram in figure 13 by $T_2$. In general for a diagram of a 3-component link $L=K_1\cup K_2\cup K_3$, we define
\begin{center}
$\widetilde{\Phi}_{\phi}(L)=\sum\limits_{\rho}(t_1^{\sum\limits_{\tau\in K_1\cap K_2}W_{\phi}(\tau, \rho)}t_2^{\sum\limits_{\tau\in K_2\cap K_3}W_{\phi}(\tau, \rho)}t_3^{\sum\limits_{\tau\in K_3\cap K_1}W_{\phi}(\tau, \rho)})\in \mathbf{Z}[t_1, t_2, t_3]/(t_1^4=t_2^4=t_3^4=1)$,
\end{center}
where $K_i\cap K_j$ denotes the set of crossing points between $K_i$ and $K_j$ and $\rho$ runs all proper colorings of the diagram by $T_2$.
\begin{proposition}
$\widetilde{\Phi}_{\phi}(L)$ is invariant under Reidemeister moves.
\end{proposition}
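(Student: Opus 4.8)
The plan is to mimic the proof of Theorem 5.1 while bookkeeping which pair of components each crossing belongs to. The first observation is that over $T_2$ every proper coloring is constant on each component (since $a\ast b=a$), so a proper coloring of a $3$-component link diagram is just a function $\{K_1,K_2,K_3\}\to\{a_1,a_2\}$; hence every Reidemeister move induces the identity on the set of proper colorings, and it suffices to fix one coloring $\rho$ and show that the monomial $t_1^{e_1(\rho)}t_2^{e_2(\rho)}t_3^{e_3(\rho)}$ is unchanged in $\mathbf{Z}[t_1,t_2,t_3]/(t_1^4=t_2^4=t_3^4=1)$, where $e_k(\rho)$ denotes the exponent of $t_k$. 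Next I would record that $\phi=\chi_{(a_1,a_2)}+\chi_{(a_2,a_1)}$ is symmetric and vanishes on the diagonal, so $W_{\phi}(\tau,\rho)=0$ whenever the two arcs meeting at $\tau$ carry the same color; in particular $e_k(\rho)=0$ (and there is nothing to prove) whenever the two components indexing $t_k$ receive the same color, whereas otherwise $W_{\phi}(\tau,\rho)=\epsilon(\tau)$ for every crossing $\tau$ between those two components, so $e_k(\rho)$ is simply the sum of the crossing signs $\epsilon(\tau)$ over that pair.

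By Polyak's theorem it is enough to treat $\Omega_{1a},\Omega_{1b},\Omega_{2a},\Omega_{3a}$, as in the proof of Theorem 5.1. For $\Omega_{1a}$ and $\Omega_{1b}$ the crossing involved is a self-crossing of a single component, so it contributes to no $e_k$ (and in any case has weight $\phi(a,a)=0$). For $\Omega_{2a}$: if the two strands lie in the same component both new crossings are self-crossings and contribute nothing; if they lie in different components the two new crossings both lie between exactly that pair of components and, as in Theorem 5.1, carry opposite signs $\epsilon$, so their contributions to the relevant exponent cancel in $\mathbf{Z}_4$.

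The core of the argument is $\Omega_{3a}$, where I would split into cases according to how the three strands $X,Y,Z$ of the move are distributed among $K_1,K_2,K_3$. The three crossings of the move pair up these strands, and (as recorded in the proof of Theorem 5.1) the move reverses the sign $\epsilon$ of each of them, so each crossing changes its contribution to the relevant $e_k$ by $\pm 2\phi(\cdot,\cdot)\in\{0,2\}$. If two of $X,Y,Z$ lie in the same component they carry the same color: the crossing between them contributes nothing, while the other two crossings both lie between the remaining pair of components, where I would check that the two sign-reversals either cancel outright or — in the sub-case where the two equally colored strands are the topmost and the bottommost — combine, using that $\phi$ is symmetric and takes values in $\{0,1\}$, into a single total change $4\phi(x,y)=0$ in $\mathbf{Z}_4$. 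This is exactly where the coefficient group $\mathbf{Z}_4$ is forced: over $\mathbf{Z}$ the change would be nonzero, while over $\mathbf{Z}_2$ the invariant degenerates (its exponents would always be even and hence trivial, so it could not, for instance, detect the Borromean link).

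The hard part will be the remaining case, when $X,Y,Z$ lie in three distinct components: then the three sign-reversals feed into three different exponents $e_1,e_2,e_3$, so there is no pairwise cancellation within a single $t_k$ to fall back on, and one must instead show that the positive quandle $2$-cocycle identity driving Theorem 5.1 splits correctly across the three pairs of components. I would attack this using that over $T_2$ at least two of the three colors coincide, together with the symmetry of $\phi$ and once more the relation $4=0$ in $\mathbf{Z}_4$; the finitely many remaining orientation configurations of $\Omega_{3a}$ are handled the same way, and assembling the cases yields the invariance of $\widetilde{\Phi}_{\phi}(L)$.
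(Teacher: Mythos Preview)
Your case analysis is far more detailed than the paper's own proof, which consists of the single sentence ``The result mainly follows from the fact that $\phi=\chi_{(a_1, a_2)}+\chi_{(a_2, a_1)}\in H_{Q+}^2(T_2; \mathbf{Z}_4)$.'' Your handling of $\Omega_{1}$, $\Omega_{2}$, and of the sub-cases of $\Omega_{3a}$ in which at most two components appear is correct, and you rightly isolate the place where working modulo $4$ is essential.

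The gap is in the case you yourself flag as hard---three strands of $\Omega_{3a}$ lying in three distinct components---and the attack you sketch does not close it. Reading the sign pattern off the paper's proof of Theorem~5.1 (top region white), the three checkerboard signs flip as $(-,+,-)\mapsto(+,-,+)$, so the contribution of the $(x,y)$-crossing to its exponent changes by $+2\phi(c_x,c_y)$, that of the $(z,y)$-crossing by $-2\phi(c_z,c_y)$, and that of the $(z,x)$-crossing by $+2\phi(c_z,c_x)$. When the three strands lie in distinct components these three changes land in three \emph{different} variables $t_k$. Your observation that over $T_2$ two of the three colors must agree kills exactly one of the three changes, but the remaining two are each $\pm2\in\mathbf{Z}_4$ sitting in two distinct variables; neither the symmetry of $\phi$ nor the relation $4=0$ lets you combine them into $0$. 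For instance, if $c_x=c_y\neq c_z$ then the exponent attached to the pair of components containing the $y$- and $z$-strands shifts by $-2$ while the exponent attached to the pair containing the $x$- and $z$-strands shifts by $+2$, so the monomial for that coloring is genuinely altered. The paper's one-line appeal to the cocycle condition does not treat this case separately either, so an argument beyond what either you or the paper supply is needed here.
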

\begin{proof}
The result mainly follows from the fact that $\phi=\chi_{(a_1, a_2)}+\chi_{(a_2, a_1)}\in H_{Q+}^2(T_2; \mathbf{Z}_4)$.
\end{proof}

Direct calculation shows that $\widetilde{\Phi}_{\phi}(BL)=2+2t_1^2t_2^2+2t_2^2t_3^2+2t_3^2t_1^2$ and $\widetilde{\Phi}_{\phi}(TL)=8$, where $BL$ denotes the Borromean link and $TL$ denotes the 3-component trivial link. Therefore $\widetilde{\Phi}_{\phi}(L)$ can be used to distinguish the Borromean link from the trivial link. Further, since we are working with $T_2$, it follows that $\widetilde{\Phi}_{\phi}(L)$ is invariant under self-crossing changes. Hence the result above shows that the Borromean link is not link-homotopic to the 3-component trivial link. Essentially speaking, the reason why $\widetilde{\Phi}_{\phi}(L)$ can tell the difference between the Borromean link and the trivial link is that the Borromean link is alternating. The writhe of a crossing between two components does not depend on the position of the third component, hence if the linking number of two components is zero then the third component has no effect on the quandle 2-cocycle invariant $($associated with $T_n)$. However the sign $\epsilon(\tau)$ we used here contains some information of the position of the third component. This is the reason why positive quandle 2-cocycle can be used to distinguish the Borromean link and the trivial link. We remark that although for any quandle 2-cocycle of $T_n$ the state-sum invariant can not distinguish the Borromean link and the trivial link, in \cite{Ino2012} A. Inoue used a 2-cocycle of a quasi-trivial quandle to show that the Borromean link is not link-homotopic to the 3-component trivial link. Note that the link-homotopy invariants defined by A. Inoue in \cite{Ino2012} have the same value on the Borromean link and the 3-component trivial link if we work with the trivial quandles.

The second example concerns the Fox 3-coloring. As we mentioned before, the diagram of knot $7_4$ in figure 12 contains a trivially colored crossing point if we consider the Fox 3-colorings. A natural question is which kind of knot diagram contains a trivially colored crossing point $($associated with $R_3)$. For example if the determinate of the knot is not divisible by 3 then there exists no nontrivial Fox 3-coloring, hence each crossing point is a trivially colored crossing point. We end this paper by a simple sufficient condition to this question, which shows that the knot diagram in figure 12 contains a trivially colored crossing point without needing to list all the proper colorings.
\begin{proposition}
Let $K$ be a knot diagram, consider the Fox 3-colorings, if $\sum\limits_{\tau}\epsilon(\tau)$ is not divisible by 3, then $K$ contains at least one trivially colored crossing point.
\end{proposition}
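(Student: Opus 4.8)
The plan is to prove the contrapositive by feeding a suitable positive quandle $2$-cocycle of $R_3$ into Theorem~5.1, in the spirit of the cocycle invariants above. First I would introduce, over the coefficient group $\mathbf{Z}_3$, the cochain $\phi$ with $\phi(i,j)=1$ for $i\neq j$ and $\phi(i,i)=0$. A routine check of the positive quandle $2$-cocycle identity $-2\phi(b,c)+\phi(a,c)+\phi(a*b,c)-\phi(a,b)-\phi(a*c,b*c)=0$, using $a*b\equiv 2b-a\ (\mathrm{mod}\ 3)$ and splitting into the cases where $a,b,c$ are distinct and where two coincide, shows $\phi\in Z^2_{Q+}(R_3;\mathbf{Z}_3)$; moreover $\phi=\delta_+^1\chi_0$ is a coboundary, since $\delta_+^1\chi_0(i,j)=\chi_0(i)-2\chi_0(j)+\chi_0(i*j)\equiv\chi_0(i)+\chi_0(j)+\chi_0(i*j)\pmod 3$, which is $0$ when $i=j$ and, as $\{i,j,i*j\}=\{0,1,2\}$ when $i\neq j$, equals $1$ otherwise.

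Next, let $K$ be a knot diagram with crossing signs $\epsilon(\tau)$ as in Figure~2. Because $\phi$ is a coboundary, the proof of Theorem~5.1 shows the total weight of \emph{every} proper $R_3$-coloring $\rho$ is trivial: $\sum_\tau\epsilon(\tau)\phi(a_\tau,b_\tau)\equiv 0\pmod 3$. At a crossing the arc-colors $b_\tau,a_\tau,a_\tau*b_\tau=2b_\tau-a_\tau$ are either all equal (precisely when $a_\tau=b_\tau$, i.e. $\tau$ is trivially colored under $\rho$) or pairwise distinct, so $\phi(a_\tau,b_\tau)$ is the indicator that $\tau$ is not trivially colored under $\rho$. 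Hence for every $\rho$,
\[
\sum_{\tau\ \text{triv. under }\rho}\epsilon(\tau)\ \equiv\ \sum_\tau\epsilon(\tau)\pmod 3 .
\]

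Now suppose $K$ has no trivially colored crossing point; I must show $3\mid\sum_\tau\epsilon(\tau)$. Regard the proper $R_3$-colorings as a vector space $V$ over $\mathbf{Z}_3$. If $\dim V=1$ then only monochromatic colorings exist and every crossing is trivially colored (or $K$ has no crossings, in which case $\sum_\tau\epsilon(\tau)=0$); so $\dim V\geq 2$. Each crossing $\tau$ gives a linear functional $\ell_\tau\in V^{\ast}$, $\ell_\tau(\rho)=a_\tau-b_\tau$ (mod $3$); "$\tau$ trivially colored under $\rho$" means $\ell_\tau(\rho)=0$, and "$\tau$ is a trivially colored crossing point" means $\ell_\tau=0$, which our hypothesis rules out — so every $\ell_\tau\neq 0$. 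By the displayed congruence it suffices to find a coloring $\rho$ with $\ell_\tau(\rho)\neq 0$ for all $\tau$ (every crossing rainbow), for then $\sum_\tau\epsilon(\tau)\equiv 0\pmod 3$. The monochromatic line $L\subset V$ lies in every $\ker\ell_\tau$; passing to $W=V/L$, the induced $\bar\ell_\tau$ are nonzero and we want $\bar\rho\notin\bigcup_\tau\ker\bar\ell_\tau$. If $\dim W\leq 1$ any $\bar\rho\neq 0$ works, and since over $\mathbf{Z}_3$ fewer than $q+1=4$ hyperplanes through the origin cannot cover a space of dimension $\geq 2$, such $\bar\rho$ also exists whenever the $\bar\ell_\tau$ determine at most three lines of $\mathbf{P}(W^{\ast})$.

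The main obstacle is the remaining case: $\dim W\geq 2$ with all four directions of $\mathbf{P}(W^{\ast})$ occurring, so that $\bigcup_\tau\ker\bar\ell_\tau=W$ and no globally rainbow coloring exists. Here I would exploit a finer identity: the quadratic form $Q(\rho)=\sum_\tau\epsilon(\tau)\ell_\tau(\rho)^2$ equals the $\epsilon$-weighted count of rainbow crossings and so vanishes on all of $V$ by the displayed congruence; since a degree-$\le 2$ function that is identically zero on $\mathbf{Z}_3^{\dim V}$ is the zero polynomial, this forces $\sum_\tau\epsilon(\tau)\,\ell_\tau^{\otimes 2}=0$ in $\mathrm{Sym}^2V^{\ast}$. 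Grouping the crossings according to the line $[\ell_\tau]$ and analysing the small, explicit space of linear relations among the squares $\ell^{\otimes 2}$ — combined with the constraint that the data $\{(\ell_\tau,\epsilon(\tau))\}$ arise from an actual knot diagram — should then force $3\mid\sum_\tau\epsilon(\tau)$; making this last reduction rigorous is, I expect, the delicate point of the argument.
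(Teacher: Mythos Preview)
Your setup is exactly the paper's: the cochain $\phi=\sum_{i\neq j}\chi_{(i,j)}$ over $\mathbf{Z}_3$, the verification that $\phi=\delta_+^1\chi_0$ is a coboundary, and the per-coloring identity
\[
\sum_{\tau\ \text{mono under }\rho}\epsilon(\tau)\ \equiv\ \sum_\tau\epsilon(\tau)\pmod 3
\]
are all precisely what the paper uses. Where you diverge is that the paper stops here: from this identity it simply writes ``Therefore if $K$ contains no trivially colored crossing points we have $\sum_\tau\epsilon(\tau)\equiv 0\pmod 3$,'' without further argument. You correctly observe that this inference is not automatic---it would follow immediately from the existence of a single coloring $\rho$ with \emph{no} monochromatic crossings---and you try to manufacture such a coloring by linear algebra over $\mathbf{Z}_3$.

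That extra analysis, however, does not close. In your ``main obstacle'' case ($\dim W\ge 2$ with all four projective directions present) a rainbow coloring genuinely need not exist, and your fallback quadratic-form argument also falls short: in $\mathrm{Sym}^2W^{\ast}$ with $\dim W=2$ the unique relation among $x^2,\,y^2,\,(x+y)^2,\,(x-y)^2$ over $\mathbf{Z}_3$ is that their \emph{sum} is zero, so $Q\equiv 0$ only forces the four direction-grouped sums $s_1,\dots,s_4$ to be \emph{equal}, giving $\sum_\tau\epsilon(\tau)\equiv 4c\equiv c$, not $0$. Thus, as you yourself flag, some genuinely knot-theoretic input beyond the cocycle identity is still required, and you have not supplied it.

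In short: your proposal reproduces the paper's argument and then goes further in an attempt to justify the one step the paper asserts without proof. The gap you identify in your own write-up is exactly the step the paper's terse proof elides; neither your proposal nor the paper's text provides a complete justification of that passage.
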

\begin{proof}
Recall that $R_3=\{0, 1, 2\}$ with quandle operations $i\ast j=2j-i$ $($mod 3$)$. Consider the coboundary
\begin{center}
$\phi=\chi_{(0, 1)}+\chi_{(1, 0)}+\chi_{(1, 2)}+\chi_{(2, 1)}+\chi_{(2, 0)}+\chi_{(0, 2)}\in H_{Q+}^2(R_3; \mathbf{Z}_3)$.
\end{center}
Since $\phi=\delta\chi_0$ it follows that $\Phi_{\phi}(K)=\sum\limits_{Col_3(K)}0$ $($here we write $\mathbf{Z}_3=\{0, 1, 2\})$. On the other hand, for each nontrivially colored crossing point $\tau$, the contribution of $\tau$ to $\Phi_{\phi}(K)$ is $\epsilon(\tau)$. Therefore if $K$ contains no trivially colored crossing points we have $\sum\limits_{\tau}\epsilon(\tau)=0$ $($mod 3$)$. The result follows.
\end{proof}

\end{document}